\newtheorem{thm}{Theorem}[section]
\newtheorem{lem}[thm]{Lemma}
\newtheorem{define}{Definition}
\def\crate{\mu}
\def\cbar{\overline{c}}
\def\Lam{\Lambda}
\def\eps{\epsilon}
\def\L{\mathcal{L}}
\def\E{\mathbb{E}}
\def\F{{\mathbf{F}}}
\def\Re{{\mathbb{R}}}
\def\mE{\mathcal{E}}
\def\mN{\mathcal{N}}
\def\mG{\mathcal{G}}
\def\ones{\mathbf{1}}
\def\ite{\textit{e}}
\def\itw{\textit{w}}
\def\itW{\textit{W}}
\def\mN{\mathcal{N}}
\def\mV{\mathcal{V}}
\def\mT{\mathcal{T}}
\def\mS{\mathcal{S}}
\newcommand{\Qin}[1][t]{Q_{in,#1}}
\newcommand{\Qout}[1][t]{Q_{out,#1}}
\newcommand{\bra}[1]{\left(#1\right)}
\newcommand{\argmax}{\operatornamewithlimits{argmax}}
\newcommand{\xx}[1][i]{x^{#1}}
\newcommand{\indicator}[1]{\mathbbm{1}_{\left[ {#1} \right] }}
\title{Tree Codes Improve Convergence Rate of Consensus Over Erasure Channels}
\author{Ravi Teja Sukhavasi and Babak Hassibi\\California
  Institute of Technology}
\begin{document}

\maketitle
\thispagestyle{empty}
\pagestyle{empty}

\begin{abstract}
We study the problem of achieving average consensus between a group of
agents over a network with erasure links. In the context of consensus
problems, the unreliability of communication links between nodes has been
traditionally modeled by allowing the underlying graph to vary with
time. In other words, depending on the realization of the link
erasures, the underlying graph at each time instant is assumed to be a
subgraph of the original graph. Implicit in this model is the
assumption that the erasures are symmetric: if at time $t$ the packet
from node $i$ to node $j$ is dropped, the same is true for the packet
transmitted from node $j$ to node $i$. However, in practical wireless
communication systems this assumption is unreasonable and, due to the
lack of symmetry, standard averaging protocols cannot guarantee that
the network will reach consensus to the true average. In
this paper we explore the use of channel coding to improve the
performance of consensus algorithms. For symmetric erasures, we show
that, for certain ranges of the system parameters, repetition codes
can speed up the convergence rate. For asymmetric erasures we show
that tree codes (which have recently been designed for erasure
channels) can be used to simulate the performance of the original
``unerased'' graph. Thus, unlike conventional consensus methods, we can
guarantee convergence to the average in the asymmetric case. The price
is a slowdown in the convergence rate, relative to the unerased
network, which is still often faster than the convergence rate of
conventional consensus algorithms over noisy links.
\end{abstract}
\section{Introduction}

In a network of agents, consensus refers to the process of achieving
agreement between the agents in a distributed manner. Consensus problems,
and in particular the problem of reaching consensus on the average of
the values of the agents, 
have been around for a while and are often used to serve as a test
case for studying distributed computation and decision making between
a group of nodes/processors/dynamical systems (\cite{Murray04, Murray07, Lin03, Tsitsiklis84, Tsitsiklis86, Ren05}). Most of the work in this area assumes that the
agents are connected via a fixed underlying graph or network. In many
applications, however, the links in the underlying graph are noisy or
unreliable. In the context of consensus
problems, the unreliability of communication links between nodes has been
traditionally modeled by allowing the underlying graph to vary with
time. In other words, at each time instant some of the links are allowed
to be erased, and depending on the realization of the link
erasures, the underlying graph at each time instant is assumed to be a
subgraph of the original graph. Furthermore, the distributed algorithm
for reaching consensus remains unchanged: the same distributed
averaging algorithm is used, except that only the information received
at each time is used. An important assumption that is implicitly made
in this model is that the erasures are symmetric: if at time $t$ the packet
from node $i$ to node $j$ is dropped, the same is true for the packet
transmitted from node $j$ to node $i$. In practical wireless
communication systems this assumption is patently unreasonable: the
additive noise at the two nodes are independent and, furthermore,
communication in the two directions occurs at either different times
or over different frequency bands. If standard averaging protocols are
performed, this loss of symmetry can prohibit the network from
reaching consensus to the true average (standard consensus protocols
require that the ``update'' matrix be doubly stochastic, something
that cannot be guaranteed in the asymmetric case). 

The goal of this paper is to explore the use of channel coding to improve the
performance of consensus algorithms, especially in the asymmetric
case. A major impetus for this work is the recently designed tree
codes for erasure channels \cite{Sukhavasi1101}, which, as we demonstrate, resolves
the problem encountered in the asymmetric case. 

For asymmetric erasures we show that tree codes can be used
to simulate the performance of the original {\em unerased}
graph. Thus, unlike conventional consensus methods, we can 
{\em guarantee} convergence to the average in the asymmetric case. As
expected, the price is a slowdown in the convergence rate, relative to
the convergence rate of the unerased network. Nonetheless, the
slowdown is still often faster than the convergence rate of
conventional consensus algorithms over erasure links.
\section{Problem Setup}

Consider a group of $N$ nodes denoted by $\mN = \{1,2,\ldots,N\}$. We assume that the nodes are connected by an undirected communication graph $\mG = (\mN,\mE)$ which is often referred to as the interaction graph. Throughout the analysis $\mG$ is assumed to time invariant. Let $A = [a_{ij}]$ denote the adjacency matrix of $G$, i.e., $a_{ij} = 1$ if $(i,j)\in\mE$ and $0$ otherwise. Note that $a_{ii} = 0$. Let $\xx_0$ denote the initial value at node $i$. The objective is for the nodes to compute the global average $r = \frac{1}{N}\ones^Tx_0$, where $\ones$ denotes an $N$-dimensional column of ones and $x_0$ is the column vector of the $\xx_0$'s. We model the communication links between nodes as packet erasure links. Further, we ignore quantization effects due to packetization. The standard packet sizes in practice justify this assumption. We denote the event of successful packet reception from node $j$ to node $i$ at time $k$ with the Bernoulli random variable $X_k^{ij}$, i.e., $X_k^{ij} = 1$ if the packet is received successfully at time $k$ and $0$ otherwise. This notation is summarized in Table \ref{tab: notation}. 

\begin{table}
 \caption{Notation}
 \label{tab: notation}
\begin{center}
\begin{tabular}{|r|l|}
\hline
$\Vert y\Vert$, $y\in\Re^N$ & $\sqrt{\sum_{i=1}^N y_i^2}$, i.e., the two norm of $y$ \\
 $\mN = \{1,2,\ldots,N\}$ & the set of nodes\\
$\mG = (\mN,\mE)$ & the underlying communication graph\\
$A = [a_{ij}]$ & the adjacency matrix of $G$, i.e.,\\
 &  $a_{ij} = 1$ if $(i,j)\in\mE$ and $0$ otherwise\\
$\Delta$ & largest degree of any vertex in $G$\\
$\xx_0$ & the initial value at node $i$\\
$x_0$ & column of $\xx_0$'s\\
$r$ & the initial average, i.e., $\frac{1}{N}\ones^Tx_0$\\
$p$ & packet erasure probability\\
$X_k^{ij}$ & 1 if the packet sent from node $j$ to\\
 & node $i$ at time $k$ is successfully\\
 & received and 0 o.w\\
$\rho(.)$ & spectral radius of a matrix\\
$A\circ B$ & Hadamard product,i.e., \\
& $(A\circ B)_{ij} = A_{ij}B_{ij}$\\
$A\otimes B$ & Kronecker product\\
$D(p,q)$ & $p\log\frac{p}{q} + (1-p)\log\frac{1-p}{1-q}$\\
& i.e., Kullbeck Leibler divergence\\
& between Bernoulli($p$) and \\
& Bernoulli($q$)\\
\hline
\end{tabular}
\end{center}
\end{table}
\section{Background}

For a fixed communication graph $\mG$, a typical algorithm to achieve consensus is of the following form. 
\begin{align}
\label{eq: genWeights}
 \xx_{k+1} = w_{ii}\xx_k + \sum_{j}w_{ij}\xx[j]_k 
\end{align}
$W$ obeys the underlying graph, i.e., for $i\neq j$, $W_{ij} = 0$ if $(i,j)\notin\mE$. In other words, each node updates its value by taking a weighted sum of its own previous value with those of its neighbors. In short, the equation can be written as 
\begin{align}
 x_{k+1} = Wx_k
\end{align}
Such an algorithm is said to achieve consensus if 
\begin{align}
 \lim_{k\rightarrow\infty}\xx_k = r \triangleq \frac{1}{N}\sum_{j}\xx[j]_0
\end{align}
In such a static setup where the weights and the underlying interaction graph does not change with time, it is well known that consensus is achieved if and only if 
\begin{align}
\label{eq: oneoneT}
 \lim_{k\rightarrow\infty}W^k = \frac{1}{N}\ones\ones^T
\end{align}
Further \eqref{eq: oneoneT} holds if and only if the following conditions hold (e.g., see \cite{Xiao04})
\begin{enumerate} 
 \item $W$ is doubly stochastic, i.e., 
	\begin{align}
\label{eq: dSto}
        \ones^T W = \ones^T,\,\,\, W\ones = \ones
       \end{align}
 \item $\rho\bra{W - \frac{1}{N}\ones\ones^T} < 1$
\end{enumerate}
Note that $x_k = W^kx_0$. Under the above conditions, $x_k \rightarrow \frac{1}{N}\ones\ones^Tx_o = r\ones$. The convergence rate, $\crate(W)$, of the above consensus algorithm is formally defined as 
\begin{align}
 \crate(W) = \sup_{x_o\neq r\ones}\lim_{k\rightarrow\infty}\left[\frac{\Vert x_k - r\ones\Vert}{\Vert x_o - r\ones\Vert}\right]^{\frac{1}{k}}
\end{align}
and is given by $\crate(W) = \rho\bra{W - \frac{1}{N}\ones\ones^T}$. There is a considerable amount of work that explores different choices of $W$ and how it affects the rate of convergence of the consensus algorithm (e.g., \cite{Xiao04}). 

For the purpose of this paper and for ease of exposition, we use a specific but natural choice of $W$ (e.g., \cite{Murray04}) given by $W = I - \eps\L$, where $\L$ is the Laplacian of the interaction graph $\mG$, i.e., $\L = D - A$. $D = \text{diag}\{\Delta_i\}$ where $\Delta_i$ is the degree of node $i$. Let $0=\lambda_{N}(\L)\leq\lambda_{N-1}(\L)\leq\ldots\leq\lambda_1(\L)$ denote the eigen values of $\L$. The multiplicity of the zero eigen value is the number of connected components in the graph and $\lambda_{N-1}(\L) > 0$ if and only if the graph is connected. 

For such a choice of $W$, the spectral radius is given by $\rho(W - \frac{1}{N}\ones\ones^T) = \max\{1 - \eps\lambda_{N-1}(\L), \eps\lambda_1(\L)-1\}$. We state this as a Lemma for later reference.
\begin{lem}
 \label{lem: boyd}
The convergence rate, $\crate$, of \eqref{eq: genWeights} with $W = I-\eps\L$ is
\begin{align}
\label{eq: crateLap}
 \crate = \max\{1 - \eps\lambda_{N-1}(\L), \eps\lambda_1(\L)-1\}
\end{align}
\end{lem}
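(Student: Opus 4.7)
Since $\mathcal{G}$ is an undirected graph, its Laplacian $\L = D - A$ is real symmetric and positive semidefinite, so it admits an orthonormal eigendecomposition $\L = U\Lam U^T$ with real eigenvalues $\lambda_1\ge\lambda_2\ge\cdots\ge\lambda_{N-1}\ge\lambda_N=0$. Because $\L\ones=0$ and (for a connected graph) the zero eigenvalue is simple, we may take the last column of $U$ to be $\ones/\sqrt{N}$. The plan is to simultaneously diagonalize the two matrices $W$ and $\frac{1}{N}\ones\ones^T$ in this basis, read off the eigenvalues of $W-\frac{1}{N}\ones\ones^T$, and then compute the spectral radius explicitly.

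First I would note that $W = I - \eps\L = U(I-\eps\Lam)U^T$, and that $\frac{1}{N}\ones\ones^T = U\,\mathrm{diag}(0,\ldots,0,1)\,U^T$ since it is the orthogonal projector onto $\mathrm{span}(\ones)$. Therefore
\begin{align*}
W - \tfrac{1}{N}\ones\ones^T
\;=\; U\,\mathrm{diag}\!\bra{1-\eps\lambda_1,\,\ldots,\,1-\eps\lambda_{N-1},\,0}\,U^T,
\end{align*}
so the nonzero eigenvalues of $W - \frac{1}{N}\ones\ones^T$ are exactly $\{1-\eps\lambda_i\}_{i=1}^{N-1}$, and hence
\begin{align*}
\crate(W) \;=\; \rho\!\bra{W - \tfrac{1}{N}\ones\ones^T} \;=\; \max_{1\le i\le N-1}\,\left|1-\eps\lambda_i\right|.
\end{align*}

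It then remains to simplify this maximum. The function $\lambda\mapsto|1-\eps\lambda|$ is convex (a V-shape with vertex at $\lambda=1/\eps$), so over the interval $[\lambda_{N-1},\lambda_1]$ containing all relevant eigenvalues its maximum is attained at one of the two endpoints, giving $\max\{|1-\eps\lambda_{N-1}|,\,|1-\eps\lambda_1|\}$. To match the stated form, I would do a small case check: if $\eps\lambda_{N-1}\le 1$ then $|1-\eps\lambda_{N-1}|=1-\eps\lambda_{N-1}$, and if $\eps\lambda_{N-1}>1$ then a fortiori $\eps\lambda_1-1\ge \eps\lambda_{N-1}-1>0$ so the $\lambda_1$ term dominates; the symmetric argument handles the $\lambda_1$ side. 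In every case the maximum absolute value equals $\max\{1-\eps\lambda_{N-1},\,\eps\lambda_1-1\}$, which is the claimed formula \eqref{eq: crateLap}. The argument is essentially a one-line spectral calculation once the simultaneous diagonalization is set up; the only mild subtlety is the sign-tracking in the final simplification, but the monotonicity $\lambda_{N-1}\le\lambda_1$ rules out the degenerate case where both expressions inside the max are negative.
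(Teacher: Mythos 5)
Your proof is correct. Note that the paper does not actually supply a proof of this lemma — it is stated as a known fact (the text before the lemma simply asserts the formula for $\rho(W-\frac{1}{N}\ones\ones^T)$ and says ``we state this as a Lemma for later reference''), with the underlying result attributed implicitly to the consensus literature (e.g.\ \cite{Xiao04}, \cite{Murray04}). Your spectral argument — orthonormal eigendecomposition of the symmetric Laplacian, observing that $\frac{1}{N}\ones\ones^T$ is the projector onto the $\lambda_N=0$ eigenspace so that $W$ and $\frac{1}{N}\ones\ones^T$ diagonalize simultaneously, and then the convexity/endpoint and sign-tracking case check — is precisely the standard calculation such a reference would invoke, and it is complete. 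The implicit hypotheses ($\mG$ connected, so the zero eigenvalue of $\L$ is simple and $\lambda_{N-1}>0$; and $\eps>0$) are consistent with the paper's setup, and you correctly flag the first and tacitly use the second via the V-shape of $\lambda\mapsto|1-\eps\lambda|$.
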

So, the conditions 1) and 2) above are satisfied if and only if $\eps < \frac{2}{\lambda_1(\L)}$. Furthermore, the convergence rate $\crate$ is maximized when the two quantities in \eqref{eq: crateLap} coincide, i.e., when
\begin{align}
\label{eq: crate}
\eps = \eps^{*} = \frac{2}{\lambda_1(\L) + \lambda_{N-1}(\L)} 
\end{align}
In particular, any $\eps < 1/\Delta$ will work where $\Delta = \max_i\Delta_i$. We remark that the techniques presented in the paper are independent of the choice of the weight matrix $W$. Whenever we wish to write closed form expressions for the convergence rates, we use the specific choice $W = I - \eps^*\L$ for simplicity.

\section{Communication Model}

In practice, the communication links between nodes can be unreliable. Conventionally, this has been taken into account by allowing the interaction topology to change with time. So, at time k, the connectivity between nodes is described by the graph $\mG_k$ where $\mG_k$ can now vary with time. There is a considerable amount of literature on the problem of achieving consensus under such time varying interaction topologies (\cite{Moreau05,Murray07, Ren05, Hatano05, Boyd06}). We model unreliable communication as packet erasures. So, at each time $k$, the packet transmitted from node $i$ to, say, node $j$ is either received ($X_k^{ji} = 1$) or erased ($X_k^{ji} = 0$). Similary, the packet sent from node $j$ to node $i$ is either received ($X_k^{ij}=1 $) or erased ($X_k^{ij}=0$). We consider two erasure models
\begin{enumerate}
 \item {\em Symmetric: } $X_{k}^{ij} = X_k^{ij}$, and $X_k^{ij}$, $X_k^{m\ell}$ are independent of each other whenever $(i,j) \notin \{(m,\ell),(\ell,m)\}$
 \item {\em Asymmetric: } $X_k^{ij}$, $X_k^{m\ell}$ are independent of each other whenever $(i,j) \neq (m,\ell)$, in particular $X_k^{ij}$ and $X_k^{ji}$ are independent. 
\end{enumerate}

The literature on consensus over time varying topologies only captures the symmetric case. Even though, consensus under very general conditions has been established, not much appears to be available by way of the rate of convergence. Under the asymmetric erasure model, the resulting interaction graph is effectively directed. An edge between node $i$ and $j$ is replaced by a pair of directed edges. The effective graph at any time depends on the packets that were erasured in that round. Under this setup, we define the adjacency matrix $A = [a_{ij}]$ and the Laplacian $\L$ as follows; $a_{ij} = 1$ if $(i\leftarrow j)\in\mE$ and $\L = D-A$ with $D = \text{diag}\{\Delta_i\}$ and $\Delta_i = \sum_j{a_{ij}}$. The resulting adjacency matrix and the Laplacian are not symmetric in general. As a result, they are not doubly stochastic either, i.e., $\ones^T\L\neq \ones^T$. When the graph $\mG$ is directed, (Olfati-Saber Murray 2007) prove that average consensus is achieved using a fixed $W = I - \eps\L$ if and only if the interaction graph $\mG$ is balanced, i.e., the in-degree of each node is equal to its out degree (cite Olfati-Saber Murray 2007). But when the link failures are random, the resulting interaction graph will generally not be balanced at every time step. But with coding, one can overcome this problem as we will show later. 
\section{Does Coding Help?}
\label{sec: codingHelps}

It turns out coding does help. In fact, to study the effect of coding we need to distinguish between the symmetric and asymmetric erasure models. When the erasures are symmetric, i.e., when $X_k^{ij} = X_k^{ji}$, this means that node $i$ (respectively, node $j$) {\em knows} what node $j$ (respectively, $i$) has received. For example, if node $i$ successfully received a packet from node $j$, it knows that node $j$ also successfully received the packet intended for it; alternately if node $i$ receives an erasure from node $j$, it knows that the packet intended for node $j$ was also erased. In this case, the links between the different nodes are erasure links with feedback (where the transmitter knows what the receiver receives). For erasure links with feedback it is well known that the optimal coding scheme is {\em retransmission}, i.e., the transmitter retransmits its packet until it is received at the receiver. 

When the erasures are not symmetric, one needs a more sophisticated coding scheme (called tree codes). We shall furher explain this below. 

When there are erasures and when there is no coding, an iteration of the consensus algorithm at node $i$ is given by
\begin{align}
\label{eq: noisy}
 \xx_{k+1} = \xx_k - \eps\sum_{j}a_{ij}X_k^{ij}(\xx_k - \xx[j]_k)
\end{align}
The effective adjacency matrix at time $k$ is then $A_k = A\circ X_k$, where $X_k = [X_k^{ij}]$. The associated Laplacian is $\L_k = D_k - A_k$ where $D_k^i = \sum_{j}A_k^{ij} = \sum_j a_{ij}X_k^{ij}$. 

\subsubsection{Symmetric Erasures} 
In this case, note that even without coding, the nodes achieve average consensus albeit at a slower rate depending on the erasure probability, say $p$. We show that coding (in this case retransmitting untill sucessful reception) results in faster convergence whenever there exists a constant $R' > 0$ such that 
\begin{align}
\label{eq: symmCodingGain}
 D(1-R',p) &> \log(\Delta + 1)\\
 \crate^{R'} &< \sqrt{\lambda_2(\Gamma)}
\end{align}
where $\crate$ is as in \eqref{eq: crateLap}, $H(.)$ is the binary entropy function and $\Gamma$ is defined in Lemma \ref{lem: symNC}. 

\subsubsection{Asymmetric Erasures}
Since $X_k^{ij}$ and $X_k^{ji}$ are independent, they are not equal in general. Note that $\L_k\ones = \ones$ but $\ones^T\L_k \neq \ones^T$ in general which violates \eqref{eq: dSto}. Furthermore, the associated graph is not balanced either, $\sum_ja_{ij}X_k^{ij} \neq \sum_ia_{ji}X_k^{ji}$, in general. In this case, the nodes will not achieve {\em average} consensus. But under very mild conditions, it is well known that the nodes achieve an agreement, i.e., $x_k \rightarrow Y\ones$ where $Y$ is a random variable that does not necessatily concentrate around the initial average $r$. But tree codes allow us to simulate the original recursions, i.e., \eqref{eq: genWeights}, and hence guarantee asymptotic average consensus. Before proceeding further, we provide a brief introduction to tree codes.
\section{Background on tree codes}
The problem of achieving consensus over erasure channels is an instance of the problem of simulating interactive communication protocols between a network of agents over unreliable links. In the specific case of consensus, the interactive communication protocol amounts to executing \eqref{eq: genWeights} at every node. In this context, Rajagopalan et al in \cite{Rajagopalan94} use {\em tree codes} to simulate such protocols with exponentially vanishing probability of error in the length of the protocol (e.g., the length of the protocol is said to be $m$ if one needs to execute $m$ iterations of \eqref{eq: genWeights}). Another very important instance of such interactive communication problems is one of stabilizing unstable dynamical systems over noisy communication channels (cite Sahai here). Even though the central role of tree codes in such problems has been identified, there have been no practical constructions until very recently. In \cite{Sukhavasi1101, Sukhavasi1103}, the authors proposed an explicit ensemble of {\em linear} tree codes with efficient decoding for the erasure channel. Equipped with this construction of tree codes, we can examine more closely how they can be used for specific problems such as consensus over erasure links which is what we do here. Before proceeding further, we will digress a little bit to outline the codes proposed in \cite{Sukhavasi1103} and list their relevant properties. 

\subsection{Linear time-invariant tree codes}
\label{subsec: treeCodes}
A tree code is essentially a semi-infinite causal encoding scheme which has a certain \lq Hamming distance'-like property. When decoding using maximum likelihood decoding over a discrete memoryless channel (DMC), such a tree code
guarantees exponentially small error probability with delay. In other words, the probability of incorrectly decoding a symbol (or paket) $d$ time steps in teh past decays exponentially in $d$. If the rate of the code is $R< 1$, such a causal encoding/decoding scheme with such an exponentially decaying probability of error (exponent $\beta$ say) is said to be $(R,\beta)-$anytime reliable. We will make this more precise below. We will describe the tree codes of (our work) in terms of their anytime reliability rather than in terms of their distance properties, because ultimately it is the exponent and rate that matter when communicating over DMCs. Since communication is packetized, let $\Lam$ denote the packet length. Each packet can be viewed as a symbol from $\F_2^\Lambda$. Suppose information is generated at the rate of $nR$ packets per time instant at the encoder. Then a rate $R$ time-invariant causal linear code is given by
\begin{align}
\label{eq: causalCode}
 c_t = G_1b_t + G_2b_{t-1} + \ldots + G_tb_1,\,\,\,t\geq 1
\end{align}
where $c_t \ \in \F_2^{n\Lam}$, $b_i \in \F_2^{nR\Lam}$ and $G_i \in \F_2^{n\Lam\times nR\Lam}$. So, at each time, the encoder receives $nR$ packets and transmits $n$ packets. Note that this is essentially a convolutional code with infinite memory. The decoder, at each time $t$, generates estimates $\hat{b}_{\tau|t}$ for $1\leq \tau \leq t$ where $\hat{b}_{\tau|t}$ denotes the decoder's estimate of $b_\tau$ using the channel outputs received till time $t$.
\begin{define}[Anytime Reliability]
 A causal code as in \eqref{eq: causalCode} is said to be $(R,\beta)-$anytime reliable if
\begin{align}
 P\bra{\hat{b}_{\tau|t} \neq b_\tau} \leq 2^{-\beta(t-\tau+1)},\,\,\,\forall\,\,\,\tau,t\geq d_o
\end{align}
for some fixed $d_o$ independent of $\tau,t$.
\end{define}
Let $p' = p^{1/\Lam}$. In \cite{Sukhavasi1103}, the authors showed that if the entries of $G_i$ are drawn i.i.d Bernoulli (1/2), then almost every code in this ensemble is $(R,\beta)-$anytime reliable for $R < 1-p'$ and $\beta < n\Lam E(R)$, where $E(R)$ is an exponent that depends on the DMC and that can be explicitly computed. For the packet erasure channel with erasure probability $p$, $E(R)$ is given by (see \cite{Sukhavasi1103})
\begin{align}
 E(R) = \left\{\begin{array}{ll}
               H^{-1}(1-R)\log\frac{1}{p'}, & R \leq \gamma_1\\
	       1-\log(1+p')-R,& \gamma_1 \leq R \leq \gamma_2\\
	       R\log\frac{R}{1-p'} + (1-R)\log\frac{1-R}{p'}, & \gamma_2 \leq R \leq 1-p'
              \end{array}
\right.
\end{align}
where 
\begin{align}
 \gamma_1 = 1-H\bra{\frac{p'}{1+p'}},\,\, \gamma_2 = \frac{1-p'}{1+p'}
\end{align}
For the rest of the analysis, we will assume that we are given an $(R,\beta)-$anytime reliable code with $d_o=0$.
\section{Main Results}
We present the results separately for the case of symmetric and asymmetric erasures.

\subsection{Symmetric Link Failures}
\label{sec: resultsSym}
Note that the underlying interaction graph $\mG$ is fixed while each link is modeled as a packet erasure channel. The graph $\mG$ is assumed to be connected and the links are undirected. If all agents know that link failures are symmetric, then each link is effectively a packet erasure channel with feedback. In each communication round, node $i$ would know that its packet transmission to node $j$ is erased if it receives an erasure from node $j$ in the same round. Recall that the consensus algorithm in the case where there are no erasures is given by 
\begin{align}
 x_{k+1} = (I - \eps\L)x_k
\end{align}
In particular, node $i$ performs the algorithm
\begin{align}
 \label{eq: noiseless}
\xx_{k+1} = \xx_{k} - \eps\sum_{j}a_{ij}(\xx_k - \xx[j]_k)
\end{align}
We now define the communication protocol.

\subsubsection{The Protocol}
A communication round is defined as one in which every node in the graph transmits one packet to each of its neighbors. The nodes are said to have completed $m$ iterations if all of them successfully computed $m$ iterations of \eqref{eq: noiseless}. Note that this will in general take more than $m$ communication rounds. Since each link is effectively an erasure channel with feedback, the optimal communication scheme at each node is to retransmit until successful reception. We describe this more precisely as follows. Let {\em e} denote an erasure. For each edge $j\rightarrow i$, we associate an input queue, $Q_{in}^{ij}$, and an output queue, $Q_{out}^{ij}$. $\Qin^{ij}$ contains the packets transmitted by node $j$ to node $i$ up to and including communication round $t$ while $\Qout^{ij}$ contains the packets received by node $i$ from node $j$. 

Also let $b_t^{ij}$ denote the packet transmitted by node $j$ to node $i$ in communication round $t$ and let $z_t^{ij}$ denote the received packet. Then
\begin{align}
 z_t^{ij} = \left\{\begin{array}{cl}
                    b_t^{ij} & \text{w.p }1-p\\
		    \textit{e} & \text{w.p } p
                   \end{array}
\right.
\end{align}
Now if $z_t^{ji} = \textit{e}$, then node $j$ infers that $b_t^{ij}$ was erased and hence retransmits it in the next communication round unless $b_t^{ij}$ was a \lq wait' symbol which we describe as follows. We say that a node $i$ has \lq new data' if it could compute one or more new iterations of \eqref{eq: noiseless}. During communication rounds where node $j$ does not have any new data to transmit, it transmits a wait symbol which we denote with {\em w}. The transmission from node $i$ to node $j$ in round $t$ is described in Algorithm \ref{alg: repetition}. Let $\mN_i$ denote the neighbors of node $i$, i.e., $\mN_i = \{j'| a_{ij'} = 1\}$.

\begin{algorithm}                      
\caption{Node $i$'s transmission to node $j$ in round $t$}          
\label{alg: repetition}                           
\begin{algorithmic}[1]                   
    \IF{$z_{t-1}^{ji} = \ite$ \AND $b_{t-1}^{ji} \neq \itw $}
        \STATE $b^{ji}_t = b^{ji}_{t-1}$, i.e., re-transmit
    \ELSE
        \STATE For each $j'\in \mN_i$, let $\ell_{t,j'} = \max\{\ell'\mid\xx[j']_{\ell'}\in \Qout^{ij'}\}$
	\STATE Compute $\ell_t = \min_{j'\in \mN_i}\ell_{t,j'}$
	\IF{$\ell_t = \ell_{t-1}+1$}
		\STATE Compute $\xx_{\ell_t+1}$ using \eqref{eq: noiseless} and set $b_t^{ji} = \xx_{\ell_t+1}$ (note that $\ell_t \leq \ell_{t-1}+1$)
	\ELSE
		\STATE i.e., $\ell_t = \ell_{t-1}$, set $b_t^{ji} = \itw$
	\ENDIF
    \ENDIF
\end{algorithmic}
\end{algorithm}
The algorithm is illustrated through an example in Fig \ref{tab: repetition}. Using such an algorithm, we have the following bounds on the convergence rate of average consensus.
\begin{figure}
\begin{center}
\begin{tabular}{cc|c|c|c|c|}
\cline{2-6}
$\Qout[5]^{i,1}$ : & $\xx[1]_0$ & $\xx[1]_1$ & \ite       & \ite    & $\xx[1]_2$ \\
\cline{2-6}
$\Qout[5]^{i,2}$ : & $\xx[2]_0$ & \ite       & $\xx[2]_1$ & \itw    & $\xx[2]_2$ \\
\cline{2-6}
$\Qin[5]^{1,i}$ :  & $\xx_0$    & $\xx_1$    & \itw       & $\xx_2$ & $\xx_2$ \\
\cline{2-6}
$\Qin[5]^{2,i}$ :  & $\xx_0$    & $\xx_1$    & $\xx_1$    & $\xx_2$ & \itw \\
\cline{2-6}
& \multicolumn{5}{c}{past $\longleftarrow$ $\longrightarrow$ present}
\end{tabular}
\caption{Consider an instance of the queues at node $i$. Suppose its only neightbors are nodes 1 and 2. In round 2, node $i$ receives an erasure from node 2 and infers that its own transmission to node 2 must also have been erased. As a result, node $i$ re-transmits $\xx_1$ to node 2 in round 3. Similarly in round 3, node $i$ knows that its transmission to node 1 was erased. Since the erased symbol was only a \lq wait', node $i$ does not re-transmit it in round 4. Instead, it checks if it can perform another iteration of \eqref{eq: noiseless}. In this case, it can and hence transmits the new data $\xx_2$ to node 1. In round 5, node $i$ does not have any new data to transmit to node 2 and hence transmits a \lq wait'.}
\label{tab: repetition}
\end{center}
\end{figure}

\begin{thm}
\label{thm: sym}
 Let $P_{M,R'}$ denote the probability that the network requires more than $M$ communication rounds to compute $MR'$ iterations of \eqref{eq: noiseless}. Further suppose that the packet erasure probability is $p$ and that erasures are symmetric. Then
\begin{align}
 P_{M,R'} \leq N2^{-M\bra{D(1-R',p) - \log(\Delta+1)}}
\end{align}
In particular, whenever $R'$ satisfies 
\begin{align}
\label{eq: RprimeS}
 D(1-R',p) > \log(\Delta + 1)
\end{align}
$P_{M,R'}$ decays exponentially fast in $M$. 
Recall that $N$ is the number of nodes and $\Delta$ the maximum degree. 
\end{thm}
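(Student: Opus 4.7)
The plan is a union bound over the $N$ nodes combined with a per-node combinatorial analysis that reduces the event to a Chernoff-type tail bound on a Binomial$(M,p)$ erasure count. I would first note that the network fails to complete $MR'$ iterations within $M$ communication rounds iff at least one node $i$ has completed strictly fewer than $MR'$ iterations, so the union bound gives
\[
P_{M,R'}\leq \sum_{i=1}^N \Pr(L_M^i < MR'),
\]
where $L_M^i$ denotes the iteration count at node $i$ at the end of round $M$ under Algorithm~\ref{alg: repetition}.

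For a fixed $i$, I would then establish the per-node bound $\Pr(L_M^i < MR') \leq 2^{-M(D(1-R',p)-\log(\Delta+1))}$. The idea is to track, in each of the $M$ rounds, whether $i$'s iteration count advances; if it fails to advance in round $t$, I would charge the stall to one of $i$'s $\Delta_i\leq\Delta$ incoming neighbors, since a stall must be caused by a missing new data packet from some neighbor $j$, and walking back through the chain of \textit{wait} symbols pins this down to a specific link-round erasure. This produces a ``stall attribution sequence'' $(a_1,\ldots,a_M)\in(\{0\}\cup\mN_i)^M$, with $a_t=0$ indicating $i$ advances in round $t$ and $a_t=j\in\mN_i$ identifying the culprit neighbor; there are at most $(\Delta+1)^M$ such sequences. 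For any one fixed sequence with $k$ nonzero entries, the configuration of erasures it requires has probability at most $p^k$ by independence of the per-link Bernoulli$(p)$ variables. Since $L_M^i<MR'$ forces $k>M(1-R')$, applying the Chernoff bound on the upper tail of the Binomial$(M,p)$ erasure count gives
\[
\sum_{k>M(1-R')} \binom{M}{k} p^k(1-p)^{M-k} \leq 2^{-MD(1-R',p)},
\]
and multiplying by the $(\Delta+1)^M$ attribution count yields the per-node bound. Summing over the $N$ nodes completes the proof, and the condition $D(1-R',p)>\log(\Delta+1)$ precisely ensures that the exponent is negative.

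The delicate step I expect to be the main obstacle is making the delay-attribution rigorous. A \textit{wait} transmitted by neighbor $j$ reflects the fact that $j$ itself is stalled, and this lag may trace several hops through the interaction graph before terminating in a genuine link erasure, so one must carefully induct on round number (or on iteration level) to show that each stalled round at $i$ admits a well-defined, injective assignment to a single (link, round) erasure drawn from the shared pool of independent Bernoulli$(p)$ variables. If the assignment is not injective, or if it depends measurably on erasures not already charged, then either the $p^k$ per-sequence bound or the unit-stall-per-erasure count would break and the Chernoff step would no longer produce the clean exponent $D(1-R',p)-\log(\Delta+1)$. Once this inductive bookkeeping is in hand, the remainder of the argument is a routine concentration calculation.
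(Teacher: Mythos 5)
Your proposal is on the right track and follows the paper's basic outline—union bound over nodes, an attribution of each stalled round to erasures, a count of $(\Delta+1)^M$ possible attribution patterns, and a Chernoff bound on the binomial erasure count—but the specific combinatorial object you use for the attribution is inadequate, and this is precisely the gap you flag at the end.

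Your ``stall attribution sequence'' $(a_1,\ldots,a_M)\in(\{0\}\cup\mN_i)^M$ records, for each of the $M$ rounds, whether node $i$ advanced and, if not, which immediate neighbor $j\in\mN_i$ is blamed. The difficulty, as you correctly sense, is that the chain of \emph{wait} symbols can travel several hops before terminating at an actual erasure, so a given $a_t=j$ does not by itself identify a (link, round) pair whose erasure has probability $p$. The realized erasure could be on an edge far from $i$, at a round earlier than $t$. Consequently the map from sequences $a$ to required erasure sets is not well-defined (it depends on the channel realization, not just on $a$), and the per-sequence probability bound does not hold as stated. Your encoding simply lacks enough information to pin down the witnessing erasures.

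The paper resolves this by defining a \emph{trellis} $\mT(\mG)$ whose vertices are time-stamped copies $v_\tau$ of the graph nodes, and whose edges connect $v_\tau$ to $u_{\tau'}$ whenever $|\tau-\tau'|=1$ and $u=v$ or $(u,v)\in\mE$. A ``time-like sequence'' is a path of $t$ edges in this trellis ending at $v_t$; at each backward step one may stay at the same node or move to a neighbor, which gives exactly $(\Delta+1)^t$ such paths—the same count as yours, but now attached to the right object. The key Lemma \ref{lem: timeLike} then shows, by an induction over rounds with a careful case analysis on the ``bottleneck'' neighbors, that there always exists a time-like sequence with at least $t-n_v(t)$ erased edges, where $n_v(t)$ is the number of iterations node $v$ has completed. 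Because the $t$ edges of any fixed time-like sequence occur at $t$ distinct rounds, their erasure events are independent Bernoulli$(p)$ variables, and the Chernoff bound you wrote applies cleanly to each sequence. So the probability estimate is correct in your sketch; what is missing is the witness object (the trellis path) and the inductive construction of the witness, which constitutes the bulk of the paper's proof. Without that, neither injectivity of the attribution nor the per-sequence bound follows.
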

\begin{proof}
See Appendix \ref{subsec: sym}.
\end{proof}
Using Theorem \ref{thm: sym}, we can determine the convergence rate Algorithm \ref{alg: repetition}, $\crate_c^s$, and it is given by
\begin{align}
 \label{eq: crateSymCode}
\crate_c^s \leq \crate^{R'}
\end{align}
where $R'$ is the largest rate such that \eqref{eq: RprimeS} is satisfied and $\crate$ is defined in \eqref{eq: crateLap}. The superscript and subscript in $\crate_c^s$ denote that it is the convergence rate with coding under symmetric erasures. We will compare this with the convergence rate without coding in Section \ref{sec: discussion}. Let 
\begin{align}
R(p) \triangleq \sup_{R'\geq 0}\{R'\mid D(1-R',p) > \log(\Delta + 1)\} 
\end{align}
Then it is easy to see that $R(p) > 0$ if and only if $p < 1/(1+\Delta)$. This means that the proof technique used here does not allow us to prove average consensus if the erasure proability is larger than $1/(1+\Delta)$. We can demonstrate how to overcome this. In fact, one can show that average consensus will be acheived for all $0\leq p\leq 1$, we will state the result as follows.
\begin{thm}
\label{thm: symImprov}
 Let $P_{M,R'}$ denote the probability that the network requires more than $M$ communication rounds to compute $MR'$ iterations of \eqref{eq: noiseless}. Further suppose that the packet erasure probability is $p$ and that erasures are symmetric. Then
\begin{align}
 P_{M,R'} \leq N2^{-MD(R',(1-p)^{|\mE|})}
\end{align}
In particular, whenever $R'$ satisfies 
\begin{align}
\label{eq: RprimeSimprov}
 R' < (1-p)^{|\mE|}
\end{align}
$P_{M,R'}$ decays exponentially fast in $M$. 
Recall that $N$ is the number of nodes and $|\mE|$ is the number of edges in the network.
\end{thm}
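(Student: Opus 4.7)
The plan is to lower-bound the network's joint progress after $M$ communication rounds by the count of \emph{universally successful} rounds, i.e.\ rounds in which every edge of $\mG$ carries its packet. Let
\[
 Y_t \;=\; \prod_{(i,j)\in\mE} X_t^{ij}.
\]
Under the symmetric erasure model distinct edges are independent, so $Y_t\sim\mathrm{Bernoulli}\bigl((1-p)^{|\mE|}\bigr)$ and the $Y_t$'s are i.i.d.\ across $t$. This is the probabilistic object that drives the Chernoff estimate in the theorem.

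The key structural step is an inductive lower bound on global iteration progress. Denote by $k_i(t)$ the largest $k$ such that $\xx_k$ has been computed at node~$i$ by the end of round~$t$, set $k_i(0)=0$, and let $k(t)=\min_i k_i(t)$. I would work with a ``flood-the-latest-iterate'' variant of Algorithm~\ref{alg: repetition}---still implementing~\eqref{eq: noiseless} at every node---and show that $Y_{t+1}=1$ implies $k(t+1)\ge k(t)+1$: when every edge succeeds in round $t+1$, each node $i$ receives $\xx[j]_{k_j(t)}$ from every $j\in\mN_i$ with $k_j(t)\ge k(t)$, which is enough data to perform one more step of~\eqref{eq: noiseless}. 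Unrolling over $t=0,1,\ldots,M-1$ yields the deterministic bound $k(M)\ge\sum_{t=1}^{M}Y_t$.

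The remaining step is a standard large-deviations estimate. Since $P_{M,R'}=P\bigl(k(M)<MR'\bigr)\le P\bigl(\sum_{t=1}^M Y_t<MR'\bigr)$ and $\sum_tY_t\sim\mathrm{Binomial}\bigl(M,(1-p)^{|\mE|}\bigr)$, the Chernoff/Sanov bound delivers
\[
 P\!\left(\sum_{t=1}^M Y_t<MR'\right)\;\le\;2^{-M\,D\bigl(R',(1-p)^{|\mE|}\bigr)}
\]
whenever $R'<(1-p)^{|\mE|}$. The factor of $N$ in the theorem is slack: it can be viewed as absorbing a per-node union bound $P(\exists i:k_i(M)<MR')\le N\max_iP(k_i(M)<MR')$ if one prefers to argue node-by-node rather than through the global counter $k(t)$.

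The main obstacle is the inductive step, and specifically the passage from Algorithm~\ref{alg: repetition} as literally written to the flooding variant. Algorithm~\ref{alg: repetition}'s ``retransmit-the-same-packet-on-failure'' and ``advance-at-most-one-iteration-per-round'' rules interact with potentially stale queue contents in a way that makes it awkward to pin down exactly what is delivered on a universally successful round that follows a partially failed one. The clean fix is to observe that node $i$'s update~\eqref{eq: noiseless} depends only on the \emph{current} iterates of its neighbors (not on prescribed past values), so letting each node broadcast its latest value in every round, and discarding stale queue entries, realizes the same recursion at each node. Once this substitution is justified the inductive step is immediate, and the Chernoff step is routine.
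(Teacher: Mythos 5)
Your core idea is correct, and in fact cleaner than the paper's argument, but your write-up stumbles over a concern that is not actually a problem, and glosses over the one point that genuinely requires an argument.

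\textbf{What you got right, and what is cleaner than the paper.} The paper proves this result by a per-node union bound: for a \emph{fixed} node $v$ it constructs a dominating i.i.d.\ Bernoulli sequence by slicing edge-erasure events diagonally across distance layers from $v$ and time (the events $A_{i,\tau-i}$), after first establishing Lemma~\ref{lem: noLooping} (``waits do not loop'') to justify that a wasted round at $v$ can be charged to an erasure on exactly one such diagonal. You instead track the \emph{global} minimum $k(t)=\min_v n_v(t)$ directly and observe that if $Y_{t+1}=1$ (no edge erased in round $t+1$), then $k(t+1)\ge k(t)+1$, giving the deterministic bound $k(M)\ge\sum_{t=1}^M Y_t$ followed by a vanilla Chernoff bound. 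This avoids the waits-don't-loop machinery entirely and, as you noticed, the factor of $N$ becomes unnecessary, so your route actually yields the slightly stronger bound $P_{M,R'}\le 2^{-MD(R',(1-p)^{|\mE|})}$.

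\textbf{Where the write-up goes wrong.} You do not need to pass to a ``flood-the-latest-iterate'' variant of Algorithm~\ref{alg: repetition}, and your stated justification for doing so---that the update~\eqref{eq: noiseless} ``depends only on the current iterates of its neighbors''---is not accurate: to compute $\xx_{k+1}$ node $i$ needs $\xx[j]_k$, and if a neighbor $j$ is one step ahead, that is a \emph{past} iterate of $j$, so ``broadcast latest and discard stale'' does not hand node $i$ what it needs. Fortunately, Algorithm~\ref{alg: repetition} as written already has exactly the property you want, and the clean way to see it is through the paper's own Lemma~\ref{lem: stateEvol}: $n_{vu}(t+1)=n_{vu}(t)+X_{t+1}^{vu}\,\indicator{n_u(t)>n_{vu}(t)}$. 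For any $v$ with $n_v(t)=k(t)$, every bottleneck neighbor $u$ has $n_{vu}(t)=k(t)-1$ and $n_u(t)\ge k(t)>n_{vu}(t)$ (since $v$ attains the minimum), so $u$ has genuine new data to send---it never sends a $\itw$ to the bottleneck direction---and on a universally successful round $n_{vu}(t+1)=k(t)$, hence $n_v(t+1)\ge k(t)+1$; non-minimal nodes trivially stay $\ge k(t)+1$. This is the step you should make explicit, because it is precisely the issue the paper flags when it says the ``probability that no edges are erased'' observation ``alone would not render a proof because successful communication could also mean that a node received only waits.'' That concern is real for an arbitrary node, but vanishes for a node at the minimum---and you only need the minimum. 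Spelling this out (rather than invoking a modified protocol) turns your sketch into a complete, correct proof that is genuinely different from, and arguably simpler than, the paper's.
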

\begin{proof}
 See Appendix \ref{subsec: symImprov}
\end{proof}
Combining Theorems \ref{thm: sym} and \ref{thm: symImprov}, we conclude that the the convergence rate of Algorithm \ref{alg: repetition}, $\crate_c^s$, is given by
\begin{align}
 \crate_c^s \leq \min\{\crate^{R(p)}, \crate^{(1-p)^{|\mE|}}\}
\end{align}

\subsection{Asymmetric Link Failures and Tree Codes}

Now suppose packet erasures are not symmetric. Since information at each node is generated one packet at a time and since the unit of communication is a packet, the rate of the code is $R = 1/n$\footnote{This kind of rate is because we are quantizing each number $\xx_k$ to fit into one packet. One can instead quantize it more finely into multiple packets, say $k$, in which case $R = k/n$}. Here, one round of communication corresponds to every pair of neighbors exchanging $n$ packets each. Then in any communication round, node $i$ does not known which of the $n$ transmitted packets have been received by each of its neighbors. In this case, we use the anytime reliable codes described in Section \ref{subsec: treeCodes}.  
\subsubsection{The protocol}
Consider the pair of nodes $i,j$ and let $b_t^{ji}$ denote the $t^{th}$ information packet destined to node $j$ from node $i$. Then the data actually transmitted by node $i$ is given by
\begin{align}
 c_\ell^{ji} = \sum_{\ell'=1}^\ell G_{\ell'} b_{\ell'}^{ji}
\end{align}
Since the code is $(R,\beta)-$anytime reliable, we have $P(\hat{b}_{\ell'|\ell}^{ji}\neq b_{\ell'}^{ji}) \leq 2^{-\beta(\ell-\ell')}$. Since the channel is an erasure channel, the maximum likelihood decoder amounts to solving linear equations. This can be done recursively and efficiently as shown in (our paper). Whenever the equations admit a unique solution to some of the variables, those variables are correctly decoded. We leave the remaining variables as erasures and do not venture a guess about their value. As a result, the decoder always knows whenever it decodes something correctly. 

Like in the case of repetition coding for symmetric erasures, for each link $j\rightarrow i$, we associate two queues $\Qin^{ij}$ and $\Qout^{ij}$ although with a slightly different meaning. The queue $\Qin^{ij}$ contains all the information packets transmitted by node $j$ to node $i$ till round $t$. In other words, $\Qin^{ij} = \{b_\tau^{ij}\}_{\tau\leq t}$. On the other hand, $\Qout^{ij}$ are node $i$'s estimates of the information packets transmitted by node $j$ so far, i.e., $\Qout^{ij} = \{\hat{b}_{\tau|t}^{ij}\}_{\tau\leq t}$. Also, it will be evident from Algorithm \ref{alg: treeCode} that $\Qin^{ji} = \Qin^{j'i}$ for all $j,j'\in \mN_i$.

With this setup, the mechanics of the protocol is very simple and is outlined in Algorithm \ref{alg: treeCode}
\begin{algorithm}
 \caption{Node $i$'s transmission to its neighbors in round $t$}
 \label{alg: treeCode}
\begin{algorithmic}[1]
  \STATE For each $j'\in \mN_i$, compute $\ell_{t,j'} = \max\{\ell'\mid \xx[j']_{\ell'}\in \Qout^{ij'}\}$ and let $\ell_t = \min_{j'\in \mN_i}\ell_{t,j'}$
  \STATE Also compute $m_{t,j'} = \max\{m'\mid \xx_{m'}\in \Qin^{j'i}\}$ and let $m_t = \min_{j'\in \mN_i}m_{t,j'}$
  \IF{$\ell_t + 1 > m_{t-1}$}
	\STATE Compute $\xx_{m_{t-1}+1}$ using \eqref{eq: noiseless} and set $b_t^{ji} = \xx_{m_{t-1}+1}$ for all $j\in \mN_i$
  \ELSE 
	\STATE set $b_t^{ji} = \itw$ for all $j\in \mN_i$
  \ENDIF
\end{algorithmic}
\end{algorithm}

We can now compute the convergence rate of average consensus achieved by the above algorithm and we state it as the following Theorem.
\begin{thm}
\label{thm: asym}
 Let $P_{M,R'}$ denote the probability that the network requires more than $M$ communication rounds to compute $MR'$ iterations of \eqref{eq: noiseless}. Further suppose that the packet erasure probability is $p$ and that erasures are asymmetric. Suppose each node uses a $(R,\beta)-$anytime reliable code. Then
\begin{align}
 P_{M,R'} \leq N2^{-M\bra{(1-R')\frac{\beta}{2} - H(R') - \log(\Delta+1)}}
\end{align}
In particular, whenever $R'$ satisfies 
\begin{align}
\label{eq: RprimeA}
 (1-R')\beta/2 > H(R') + \log(\Delta + 1)
\end{align}
$P_{M,R'}$ decays exponentially fast in $M$. 
\end{thm}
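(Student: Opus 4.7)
My plan is to convert the event into a statement about the accumulation of decoding delays along dependency chains in the interaction graph, and then peel off the probability with a union bound together with a tail bound on sums of anytime-reliable delays.

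First I would set up, for each node $i$ and iteration index $k\geq 0$, the random round $\tau_i(k)$ at which $i$ first executes iteration $k$ of \eqref{eq: noiseless} inside Algorithm~\ref{alg: treeCode}, and the slippage $\sig_i(k)=\tau_i(k)-k$. Inspecting the test ``$\ell_t+1>m_{t-1}$'', node $i$ cannot advance from iteration $k$ to $k+1$ until $\xx[j]_k$ has been correctly decoded from every neighbor $j$. If $\delta_{ij,k}$ denotes the number of rounds, after the round in which $j$ transmitted $\xx[j]_k$, until $i$'s anytime-reliable decoder first recovers that packet, then the algorithm enforces
\begin{align*}
\sig_i(k+1)=\max_{j\in\mN_i\cup\{i\}}\bra{\sig_j(k)+\delta_{ij,k}},\qquad \sig_i(0)=0.
\end{align*}
By the $(R,\beta)$-anytime reliability reviewed in Section~\ref{subsec: treeCodes} we have $P(\delta_{ij,k}\geq d)\leq 2^{-\beta d}$, and under the asymmetric erasure model the delays on distinct ordered links are mutually independent. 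The event $\{P_{M,R'}\}$ is then exactly $\bigcup_i\{\sig_i(MR')>M(1-R')\}$.

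Next I would unroll the recursion: $\sig_i(MR')$ is the maximum, over walks $i=j_{MR'},j_{MR'-1},\dots,j_0$ of length $MR'$ in $\mG$ with self-loops allowed, of $\sum_{k=0}^{MR'-1}\delta_{j_{k+1}j_k,k}$. There are at most $(\Delta+1)^{MR'}\leq 2^{M\log(\Delta+1)}$ such walks and along each walk the $\delta$'s are independent. For a single walk I would bound $P\bra{\sum_k\delta_k>M(1-R')}$ by a Chernoff argument with exponential twist $\alpha=\beta/2$: since $E[2^{(\beta/2)\delta_k}]\leq 1/(1-2^{-\beta/2})$ is an absolute constant, one extracts the tail $2^{-(\beta/2)M(1-R')}$, while a combinatorial $\binom{M}{MR'}\leq 2^{MH(R')}$ factor absorbs the number of ways of interleaving the $MR'$ ``progress'' rounds with the $M(1-R')$ ``wait'' rounds inside the $M$ total rounds. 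Composing the union bound over walks with a final union bound over the $N$ starting nodes then reproduces
\begin{align*}
P_{M,R'}\leq N\cdot 2^{-M\left[(1-R')\beta/2-H(R')-\log(\Delta+1)\right]}.
\end{align*}

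The main obstacle is the measurability/independence bookkeeping inside the recursion: the delays $\delta_{ij,k}$ are indexed by the random stopping times $\tau_j(k)$, so I must invoke the memorylessness of the packet erasure channel together with the per-round uniformity of the anytime-reliability guarantee to argue that, conditioned on everything up to round $\tau_j(k)$, $\delta_{ij,k}$ still has tail $2^{-\beta d}$ and remains independent of the delays on other ordered links; only then are both the unrolled $\max$ over walks and the Chernoff bound along each walk legitimate. A secondary subtlety is that the Chernoff twist must be strictly below $\beta$ for the MGF to converge, so the natural choice $\alpha=\beta/2$ is what produces the factor $1/2$ in the exponent and, together with the entropy and degree terms from the two union bounds, the threshold \eqref{eq: RprimeA}.
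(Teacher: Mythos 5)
Your overall decomposition (recursion for slippage, unroll into a max over walks, union-bound over walks) is structurally parallel to the paper's trellis/witness construction, but the probabilistic core is genuinely different and it has a gap that the paper's machinery is precisely there to fix.

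The false step is the claim that ``along each walk the $\delta$'s are independent.'' The delays $\delta_{j_{k+1}j_k,k}$ are independent across \emph{distinct directed edges}, but a walk $j_0,j_1,\dots,j_{MR'}$ in $\mG$ with self-loops can revisit the same directed edge at different iteration indices $k<k'$. When it does, $\delta_{j_{k+1}j_k,k}$ and $\delta_{j_{k'+1}j_{k'},k'}$ are decoding delays on the \emph{same} link; their decoding windows can overlap (e.g., a long erasure burst starting near $\tau_{j_k}(k)$ also covers the transmission of packet $k'$), so they are positively correlated. Your Chernoff bound then does not apply to the per-walk sum. This is exactly where the paper introduces Lemma~\ref{lem: disjointIntervals} (disjoint error intervals on a single link correspond to independent channel uses) together with Schulman's Lemma~\ref{lem: Schulman96} (any collection of intervals contains a disjoint sub-collection of at least half the total length), yielding Lemma~\ref{lem: intervalThingy}; the resulting $\beta/2$ is an unavoidable loss due to overlapping error intervals. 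In your proposal, by contrast, $\beta/2$ is an arbitrary choice of twist parameter: if your independence assumption actually held you could push the twist to any $\alpha<\beta$ and prove a strictly stronger theorem, which is a signal the assumption is wrong.

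Two secondary issues. First, even granting independence, the Chernoff step produces a factor $E[2^{\alpha\delta}]^{MR'}$ whose base is a constant $>1$; the resulting $MR'\log E[2^{\alpha\delta}]$ term in the exponent is not the same as the $MH(R')$ in the theorem, and the $\binom{M}{MR'}$ factor you invoke does not actually emerge from the Markov/Chernoff computation you set up — in the paper it arises from union-bounding over which $m=M(1-R')$ of the $M$ trellis levels host error intervals. Second, the recursion $\sig_i(k+1)=\max_{j\in\mN_i\cup\{i\}}(\sig_j(k)+\delta_{ij,k})$ implicitly identifies the iteration index with the transmitted-packet index; because Algorithm~\ref{alg: treeCode} interleaves wait symbols into the encoded stream, the packet that carries $\xx[j]_k$ is $b^{ij}_{\tau_j(k)}$ and not $b^{ij}_k$, and the argument needs to track the witness through these random packet indices — again, this is what the paper's trellis construction and Lemma~\ref{lem: witness} bookkeep for you.
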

\begin{proof}
See Appendix \ref{subsec: asym}
\end{proof}
As in the symmetric case, the convergence rate, $\crate_c^a$, using tree codes is given by
\begin{align}
 \label{eq: crateAsymCode}
\crate_c^a = \crate^{R'}
\end{align}
where $R'$ is the largest rate such that \eqref{eq: RprimeA} is satisfied and $\crate$ is as in \eqref{eq: crateLap}.
Let 
\begin{align}
R(\beta) \triangleq \sup_{R'\geq 0}\left\{R'\mid (1-R')\beta/2 > H(R') + \log(\Delta + 1)\right\} 
\end{align}
Then much like in Section \ref{sec: resultsSym}, it is easy to see that $R(\beta) > 0$ if and only if $\beta > 2\log(1+\Delta)$. 

\section{Discussion - Coding Vs No Coding}
\label{sec: discussion}
When there is no coding, the consensus recursion is given by \eqref{eq: noisy}. We begin with the case of symmetric erasures.

\subsection{Symmetric Erasures}

The convergence rate of \eqref{eq: noisy} when erasures are symmetric is given by the following Lemma
\begin{lem}[Symmetric Erasures]
\label{lem: symNC}
When the erasures are symmetric and i.i.d over time and space, the convergence rate of \eqref{eq: noisy}, $\crate_{\cbar}^s$ which we define as
\begin{align}
\crate_{\cbar}^s = \sup_{x_o\neq r\ones}\lim_{k\rightarrow\infty}\left[\frac{\E\Vert x_k - r\ones\Vert^2}{\Vert x_o - r\ones\Vert^2}\right]^{\frac{1}{2k}}
\end{align}
is given by
\begin{align}
 \crate_{\cbar}^s = \sqrt{\lambda_2(\Gamma_s)}
\end{align}
where $\Gamma_s = \E(I - \eps\L_0)\otimes (I - \eps\L_0)$ is a deterministic matrix that is a function of $\eps, p, \L$ and can be computed explicitly in closed form. The subscript $\cbar$ indicates that there is no coding and the subscript $s$ in $\Gamma_s$ is because the erasures are symmetric
\end{lem}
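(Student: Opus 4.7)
The plan is to convert the stochastic recursion \eqref{eq: noisy} into a deterministic linear recursion for the second-moment matrix $P_k := \E[e_k e_k^T]$ where $e_k := x_k - r\ones$, identify $\Gamma_s$ as its generator, and extract the rate from the spectrum of $\Gamma_s$. First I exploit the symmetric erasure hypothesis $X_k^{ij}=X_k^{ji}$ to conclude that $A_k = A\circ X_k$, and hence $\L_k = D_k - A_k$, is symmetric almost surely, so $\L_k\ones = \ones^T\L_k = 0$ pathwise. Consequently $(I-\eps\L_k)(r\ones) = r\ones$ pathwise, and subtracting $r\ones$ from \eqref{eq: noisy} yields the homogeneous recursion $e_{k+1} = (I-\eps\L_k)e_k$ with the conservation law $\ones^T e_k = 0$ preserved at every step.

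Next, because $\L_k$ is independent of $\L_0,\ldots,\L_{k-1}$ and thus of $e_k$, taking expectations and using i.i.d.\ stationarity gives
\begin{align*}
P_{k+1} = \E\bra{(I-\eps\L_0)\,P_k\,(I-\eps\L_0)}.
\end{align*}
Vectorizing via $\text{vec}(AXA) = (A\otimes A)\,\text{vec}(X)$, which is legitimate because $(I-\eps\L_0)$ is symmetric, converts this into $\text{vec}(P_{k+1}) = \Gamma_s\,\text{vec}(P_k)$, so $\text{vec}(P_k) = \Gamma_s^k\,\text{vec}(e_0 e_0^T)$ and
\begin{align*}
\E\Vert e_k\Vert^2 = \text{tr}(P_k) = \text{vec}(I)^T\,\Gamma_s^k\,\text{vec}(e_0 e_0^T).
\end{align*}

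Now I read off the rate from the spectrum of $\Gamma_s$. Since it is the expectation of a Kronecker square of symmetric matrices, $\Gamma_s$ is itself symmetric. Because $(I-\eps\L_0)\ones = \ones$ almost surely, $\ones\otimes\ones$ is an eigenvector of $\Gamma_s$ with eigenvalue $1$, and for $\eps < 1/\Delta$ a Gershgorin bound applied realization-by-realization identifies this as $\lambda_1(\Gamma_s)$. The constraint $\ones^T e_0 = 0$ forces $\text{vec}(e_0 e_0^T)\perp\ones\otimes\ones$, so the orbit $\text{vec}(P_k)$ remains in the $\Gamma_s$-invariant orthogonal complement, on which the spectral radius is $\lambda_2(\Gamma_s)$. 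Expanding $\text{vec}(e_0 e_0^T)$ in an orthonormal eigenbasis of $\Gamma_s$ and taking $(2k)$-th roots of $\E\Vert e_k\Vert^2$ then yields the upper bound $\crate_{\cbar}^s \leq \sqrt{\lambda_2(\Gamma_s)}$.

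The step I expect to be the main obstacle is the matching lower bound, i.e., producing an admissible $x_0\neq r\ones$ whose rank-one lift $e_0 e_0^T$ has non-vanishing projection on an eigenspace of $\Gamma_s$ associated to $\lambda_2(\Gamma_s)$. A priori an eigenvector of $\Gamma_s$ need not be of the form $v\otimes v$, so one cannot simply \emph{pick} $e_0$ to be that eigenvector. I would argue by noting that the symmetric rank-one matrices $\{e_0 e_0^T : \ones^T e_0 = 0\}$ positively span all symmetric matrices supported on $\ones^\perp$, so at least one such $e_0 e_0^T$ must have non-trivial component in the dominant eigenspace of $\Gamma_s$ (else that eigenspace would be orthogonal to an entire spanning cone, a contradiction); combined with the fact that the $(2k)$-th root limit is insensitive to sign flips and lower-order terms, this produces an $x_0$ realizing the supremum and gives the matching lower bound.
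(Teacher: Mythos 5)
Your route to the $\Gamma_s$-recursion is essentially the paper's. The paper tracks $P_k = \E Y_k^TY_k$ with $Y_k$ the product of the random transition matrices, leaves $x_0 - r\ones$ outside as a quadratic form, and iterates from $P_0 = I$; you track $P_k = \E[e_ke_k^T]$ and fold the initial error into $P_0 = e_0e_0^T$. With $\L_0$ symmetric the two second-moment recursions produce the identical $\Gamma_s = \E(I-\eps\L_0)\otimes(I-\eps\L_0)$ and the same scalar $\E\Vert e_k\Vert^2$, so the decomposition, the independence step, and the vec-identity step all match.

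Where you go beyond the paper is in trying to pin down the rate rather than merely exhibit the upper bound: the paper derives $\mathrm{vec}(P_k) = \Gamma_s^k\,\mathrm{vec}(I)$ and then simply declares the rate to be $\sqrt{\lambda_2(\Gamma_s)}$ without producing an $x_0$ realizing the supremum. Your spanning argument for the lower bound is the right instinct, but as written it has a gap. You arrange for $\mathrm{vec}(e_0e_0^T)$ to have a nonzero component in the $\lambda_2$-eigenspace, yet the scalar you are actually controlling is $\mathrm{vec}(I)^T\Gamma_s^k\,\mathrm{vec}(e_0e_0^T)$, and this still fails to grow at rate $\lambda_2^k$ if $\mathrm{vec}(I)$ is orthogonal to that eigenspace. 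Relatedly, $\Gamma_s$ preserves the symmetric/antisymmetric decomposition of $\Re^{N^2}$, so if the second eigenvector of $\Gamma_s$ lay in the antisymmetric part then neither $\mathrm{vec}(e_0e_0^T)$ nor $\mathrm{vec}(I)$ could reach it; the rate is really governed by the leading eigenvalue of $\Gamma_s$ restricted to symmetric matrices orthogonal to $\ones\otimes\ones$, a distinction the paper also glosses over. The clean repair that removes any dependence on $\mathrm{vec}(I)$ is to use positive semidefiniteness of $P_k$: since $P_k \succeq 0$ one has $\Vert P_k\Vert_F \le \mathrm{tr}(P_k) \le \sqrt{N}\,\Vert P_k\Vert_F$, so the growth exponent of $\E\Vert e_k\Vert^2 = \mathrm{tr}(P_k)$ equals that of $\Vert\Gamma_s^k\,\mathrm{vec}(e_0e_0^T)\Vert$; your positive-spanning argument over $\{e_0e_0^T : \ones^Te_0 = 0\}$ then yields an $e_0$ whose lift has nonvanishing overlap with the dominant eigenspace on that symmetric invariant subspace, completing the lower bound.
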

\begin{proof}
See Appendix \ref{proof: symNC}.
\end{proof}

Consider the case of coding in the presence of symmetric erasures. From Theorem \ref{thm: sym} and \eqref{eq: crate}, it is easy to see that the convergence rate is given by $\crate_c^s$ in \eqref{eq: crateSymCode}. So, whenever $\crate_c^s < \crate_{\cbar}^s$, coding offers an advantage. We state this as a Theorem
\begin{thm}
\label{thm: crateSym}
 In the case of symmetric erasures, coding offers a faster convergence than \eqref{eq: noisy} whenever there is a $R' > 0$ such that 
\begin{subequations}
\label{eq: symDiscuss}
\begin{align}
 (1-R')\log\frac{1}{p} &> \log(\Delta + 1) + H(R') \\
 \bra{\rho(I - \eps\L - \frac{1}{N}\ones\ones^T)}^{R'} &< \sqrt{\lambda_2(\Gamma_s)}
\end{align}
\end{subequations}
\end{thm}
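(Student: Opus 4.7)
The plan is to obtain the conclusion by directly chaining the two convergence-rate characterizations already established in the paper. I would first invoke Lemma \ref{lem: symNC} to identify the uncoded convergence rate as $\crate_{\cbar}^s = \sqrt{\lambda_2(\Gamma_s)}$, and then combine Theorem \ref{thm: sym} with \eqref{eq: crateSymCode} to bound the coded rate by $\crate_c^s \leq \crate^{R'}$, where $\crate = \rho(I - \eps\L - \frac{1}{N}\ones\ones^T)$. Given such an $R'$, hypothesis (b) then immediately yields
\begin{align*}
\crate_c^s \;\leq\; \crate^{R'} \;<\; \sqrt{\lambda_2(\Gamma_s)} \;=\; \crate_{\cbar}^s,
\end{align*}
which is exactly the assertion that coding achieves a strictly faster convergence rate than \eqref{eq: noisy}.

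The only step that needs an actual calculation is verifying that hypothesis (a) implies the rate condition $D(1-R', p) > \log(\Delta + 1)$ of Theorem \ref{thm: sym}, so that the bound $\crate_c^s \leq \crate^{R'}$ is indeed available. Expanding the KL divergence,
\begin{align*}
D(1-R', p) \;=\; (1-R')\log\frac{1}{p} + R'\log\frac{1}{1-p} - H(R'),
\end{align*}
and observing that $R'\log\frac{1}{1-p} \geq 0$ for $p \in [0, 1)$, I obtain the lower bound $D(1-R', p) \geq (1-R')\log\frac{1}{p} - H(R')$. Hypothesis (a) asserts precisely that this lower bound exceeds $\log(\Delta + 1)$, so Theorem \ref{thm: sym} applies at the chosen $R'$. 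I expect this small manipulation of the divergence to be the main (and only real) obstacle; the remainder of the argument is direct assembly of the previously established pieces, with hypothesis (a) guaranteeing feasibility of the coding rate and hypothesis (b) guaranteeing that this feasible rate actually beats the uncoded spectral radius.
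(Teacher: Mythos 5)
Your proposal is correct and follows the same route the paper implies but does not spell out: identify the uncoded rate via Lemma~\ref{lem: symNC}, bound the coded rate via Theorem~\ref{thm: sym} and~\eqref{eq: crateSymCode}, and compare. Your expansion $D(1-R',p) = (1-R')\log\tfrac{1}{p} + R'\log\tfrac{1}{1-p} - H(R')$ together with dropping the nonnegative term $R'\log\tfrac{1}{1-p}$ is exactly the algebra needed to see that hypothesis (a) implies the feasibility condition~\eqref{eq: RprimeS}, and it correctly exposes why the theorem's condition (a) is a (slightly conservative) sufficient form of the KL condition stated earlier in~\eqref{eq: symmCodingGain}.
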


\subsection{Asymmetric Erasures}
As mentioned in Section \ref{sec: codingHelps}, when link failures are asymmetric, the algorithm of \eqref{eq: noisy} does not achieve average consensus. Nevertheless the nodes reach agreement and the rate of convergence to agreement has been characterized in \cite{Zhou2009}. Here, we characterize the mean squared error of the state from average consensus.
\begin{lem}[Asymmetric Erasures]
\label{lem: asymNC}
When the erasures are asymmetric and i.i.d over time and space, we have
\begin{align}
\label{eq: asymNC}
\E\Vert x_k - r\ones\Vert^2 = (x_o-r\ones)^T\otimes(x_o-r\ones)^T\Gamma_a^k vec(I)
\end{align}
Here $I$ is an $N\times N$ identity matrix and
\begin{align}
\Gamma_a = \E(I - \eps\L^T_0)\otimes (I - \eps\L^T_0)
\end{align}
where $\Gamma_a$ is a deterministic matrix that is a function of $\eps, p, \L$ and can be computed explicitly in closed form. Furthermore $\rho(\Gamma_a) = 1$.
\end{lem}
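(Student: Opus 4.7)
My plan is to express $e_k := x_k - r\ones$ as the action of a random matrix product on $e_0$, convert the squared norm $\Vert e_k\Vert^2$ to a Kronecker/$vec$ form in which the i.i.d.\ structure factors the expectation cleanly, and then read off $\rho(\Gamma_a) = 1$ from a column-stochasticity argument.

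Let $W_k = I - \eps\L_k$. Even in the asymmetric model the Laplacian is built so that $\L_k\ones = 0$, hence $W_k\ones = \ones$ pointwise, so the random product $M_k = W_{k-1}W_{k-2}\cdots W_0$ satisfies $M_k\ones = \ones$. From $x_k = M_kx_0$ and $r\ones = rM_k\ones$ we get $e_k = M_ke_0$. The elementary identity $v^Tv = (v\otimes v)^T vec(I)$ (immediate coordinatewise on $\Re^N$) combined with the Kronecker mixed-product rule $(M_k\otimes M_k)(e_0\otimes e_0)=(M_ke_0)\otimes(M_ke_0)$ yields
\[
\Vert e_k\Vert^2 = (e_0\otimes e_0)^T(M_k^T\otimes M_k^T)\, vec(I).
\]
Applying $(AB)\otimes(AB)=(A\otimes A)(B\otimes B)$ telescopically together with the i.i.d.\ assumption on the $\L_k$ gives $\E[M_k^T\otimes M_k^T]=(\E[W_0^T\otimes W_0^T])^k = \Gamma_a^k$, and taking expectations produces the claimed identity (using $(e_0\otimes e_0)^T = e_0^T\otimes e_0^T$). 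Explicit closed-form computability of $\Gamma_a$ follows by expanding $(I-\eps\L_0^T)\otimes(I-\eps\L_0^T)$ and noting that every entry of $\L_0$ is a linear Bernoulli function of the link variables $X_0^{ij}$, so the entries of $\Gamma_a$ are polynomials in $\eps$ and $p$ whose coefficients depend only on the fixed graph.

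For $\rho(\Gamma_a)=1$, the key is that $W_0^T$ is column-stochastic in every realization: for $\eps\leq 1/\Delta$ the matrix $W_0$ has nonnegative entries, and $\ones^T W_0^T = (W_0\ones)^T = \ones^T$. Hence $W_0^T\otimes W_0^T$ is a nonnegative matrix satisfying $(\ones\otimes\ones)^T(W_0^T\otimes W_0^T)=(\ones\otimes\ones)^T$, both properties surviving expectation, so $\Gamma_a$ itself is nonnegative and column-stochastic. A nonnegative column-stochastic matrix has spectral radius exactly $1$ (its transpose is row-stochastic with right eigenvector $\ones\otimes\ones$; Gershgorin confines all eigenvalues to the closed unit disk, and $1$ is attained), giving $\rho(\Gamma_a)=1$.

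The main subtlety is that nothing in the argument may invoke $\ones^T\L_k = 0$, which is precisely the identity that fails in the asymmetric model; the derivation must be driven by the one-sided fact $\L_k\ones=0$ alone. This is what forces the covariance recursion onto $W_0^T\otimes W_0^T$ rather than $W_0\otimes W_0$, and it is the structural reason that $\rho(\Gamma_a)=1$ rather than strictly less than one---reflecting that, without coding, the recursion fails to drive the error to zero, consistent with the earlier observation that the nodes reach only agreement and not true average consensus.
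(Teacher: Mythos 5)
Your derivation of the identity is equivalent to the paper's: the paper writes $P_k=\E Y_k^TY_k$, obtains the recursion $vec(P_k)=\Gamma_a\,vec(P_{k-1})$ from $vec(ABC)=(C^T\otimes A)\,vec(B)$, and telescopes; you telescope $\E[M_k^T\otimes M_k^T]=\Gamma_a^k$ directly. Same Kronecker/$vec$ mechanics, just packaged differently, and both only use $\L_k\ones=0$, as you correctly emphasize.

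Where you genuinely diverge is the claim $\rho(\Gamma_a)=1$, and here your route is arguably the better one. The paper argues that $|x^T(I-\eps\L_0)x|\le\Vert x\Vert^2$ for all real $x$ and concludes $\rho(I-\eps\L_0)\le 1$. That implication is false for general non-symmetric matrices: the real quadratic form $x^TMx$ only sees the symmetric part $\tfrac{1}{2}(M+M^T)$, so it bounds the real parts of the eigenvalues but not their magnitudes (a skew-symmetric $M$ has $x^TMx\equiv 0$ yet arbitrary spectral radius). The paper's conclusion is nonetheless true for the matrices at hand, but the stated reason does not establish it. Your argument — for $\eps\le 1/\Delta$ the realization $W_0=I-\eps\L_0$ is entrywise nonnegative with $W_0\ones=\ones$, hence row-stochastic, hence $\rho(W_0)=1$; Kronecker products and expectations of such matrices remain nonnegative with unit row (resp.\ column) sums, so $\rho(\Gamma_a)=1$ — is rigorous and shorter. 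One small caveat worth flagging: the nonnegativity step uses $\eps\le 1/\Delta$, which the paper explicitly allows but which is not guaranteed by the ``optimal'' choice $\eps^*=2/(\lambda_1+\lambda_{N-1})$. If one insists on larger $\eps$, the cleanest repair is a direct Gershgorin argument on $I-\eps\L_0$ itself: the $i$-th disc is centered at $1-\eps d_i$ with radius $\eps d_i$ (where $d_i=(D_0)_{ii}$), and it lies in the closed unit disc precisely when $\eps d_i\le 1$, which is again $\eps\le 1/\Delta$ — so the same restriction reappears, and stating it explicitly is the honest fix for either proof.
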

\begin{proof}
See Appendix \ref{proof: asymNC}.
\end{proof}
Note that $\ones^T\Gamma_a = \ones^T$ but $\Gamma_a\ones \neq \ones$. Let $c$, $\Vert c\Vert = 1$ be the right eigen vector of $\Gamma_a$ corresponding to eigen value 1, i.e., $\Gamma_a c = c$. Then, it is easy to see that $\lim_{k\rightarrow\infty}\Gamma_a^k = \frac{1}{N}c\ones^T$. Using this in \eqref{eq: asymNC}, we get
\begin{align}
 \lim_{k\rightarrow\infty}\E\Vert x_k - r\ones\Vert^2 = (x_o-r\ones)^T\otimes(x_o-r\ones)^Tc
\end{align}
This proves that one cannot achieve average consensus without coding when link failures are asymmetric. So, a major benefit of using tree codes in such cases is to {\em guarantee} average consensus. Furthermore, tree codes can be used to implement {\em any} distributed protocol over a network with erasure links.
\appendix

\subsection{Proof of Lemma \ref{lem: symNC}}
\label{proof: symNC}

Note that $\L_k\ones = 0$ whether or not the erasures are symmetric. Recall that $r = \frac{1}{N}\ones^Tx_0$.
\begin{subequations}
\begin{align}
 & x_{k} - r\ones = (I - \eps\L_{k-1})(x_{k-1} - r\ones)\\
& x_{k} - r\ones  = \\
 & \underbrace{(I - \eps\L_{k-1})(I - \eps\L_{k-2})\ldots(I - \eps\L_{0})}_{\triangleq Y_k}(x_{0} - r\ones)
\end{align}
\end{subequations}
\begin{align}
\label{eq: symm1}\E\Vert x_{k} - r\ones\Vert^2 &= (x_{0} - r\ones)^T\E Y^T_kY_k (x_{0} - r\ones)\nonumber\\
&= (x_{0} - r\ones)^T\otimes(x_{0} - r\ones)^T vec(P_k)
\end{align}
where $P_k = \E Y^T_kY_k$. Recall that the erasure process is independent over time and across links. Then we have
\begin{subequations}
\label{eq: symm2}
\begin{align}
P_k &= \E(I - \eps\L^T_{0})P_{k-1}(I - \eps\L_{0})\\
 vec(P_k) &= \Gamma_s vec(P_{k-1}),\,\,\,\text{where}\\
 \label{eq: gammasym}\Gamma_s &= \E(I - \eps\L^T_0)\otimes (I - \eps\L^T_0)
\end{align}
\end{subequations}
Since erasures are symmetric, $\L^T_0 = \L_0$. Furthermore, we have $vec(P_k) = \Gamma_s^k vec(I)$, where $I$ is an $N\times N$ identity matrix. Putting \eqref{eq: symm1} and \eqref{eq: symm2} together, we get
\begin{align}
 \E\Vert x_{k} - r\ones\Vert^2 = (x_{0} - r\ones)^T\otimes(x_{0} - r\ones)^T\Gamma_s^k vec(I)
\end{align}
So, the rate of convergence of the consensus algorithm in the absence of coding is clearly determined by $\Gamma_s$. Observe that $\Gamma_s$ is doubly stochastic, i.e., $\ones^T\Gamma_s = \ones^T$ and $\Gamma_s\ones = \ones$. It has one eigen value at 1 and all others are strickly smaller than 1 in magnitude. Let $\lambda_2(\Gamma_s)$ denote the second largest eigen value in magnitude. Then clearly
\begin{align}
 \lim_{k\rightarrow\infty}\Gamma_s^k = \frac{1}{N^2}\ones\ones^T
\end{align}
and the rate of convergence is given by
\begin{align}
 \crate_{\cbar}^s = \sqrt{\lambda_2(\Gamma_s)}
\end{align}

\subsection{Proof of Lemma \ref{lem: asymNC}}
\label{proof: asymNC}

Except the claim $\rho(\Gamma_a) = 1$, everything else follows from Appendix \ref{proof: symNC}. Since $\Gamma_a = \E(I - \eps\L^T_0)\otimes (I - \eps\L^T_0)$, the claim $\rho(\Gamma_a) = 1$ follows if $\rho(I - \eps\L_0) = 1$ which is what we show. Recall that the random variable $X^{ij}_0$ is defined as $X^{ij}_0 = 0$ if the link $j\rightarrow i$ is erased at time $0$ and $X^{ij}_0 = 1$ otherwise. For brevity, we will write $X^{ij}$ instead of $X^{ij}_0$. Then it is easy to verify that one can write $\L_0$ as follows
\begin{align}
 \L_0 = \sum a_{ij}X^{ij}e_i(e_i-e_j)^T
\end{align}
where $e_i$ is the $i^{th}$ unit vector. In particular, the underlying Laplacian in the absence of any erasures can be written as $\L = \sum a_{ij}e_i(e_i-e_j)^T$. For any $x\in\Re^N$, we have
\begin{align}
 x^T(I - \eps\L_0)x &= x^T\bra{I - \frac{\eps}{2}(\L_0 + \L^T_0)}x\nonumber\\
\label{eq: asymNC1}&= \Vert x\Vert^2 - \frac{\eps}{2}\sum a_{ij}X^{ij}(x_i-x_j)^2 \leq \Vert x\Vert^2
\end{align}
Furthermore,
\begin{align}
 \Vert x\Vert^2 - \frac{\eps}{2}\sum a_{ij}X^{ij}(x_i-x_j)^2 &\geq \Vert x\Vert^2 - \frac{\eps}{2}\sum a_{ij}(x_i-x_j)^2\nonumber\\
\label{eq: asymNC2}&= x^T(I-\eps\L)x \geq -\Vert x\Vert^2
\end{align}
The last inequality follows from the fact that $\rho(I-\eps\L) = 1$. Combining \eqref{eq: asymNC1} and \eqref{eq: asymNC2}, we have $|x^T(I-\eps\L_0)x| \leq \Vert x\Vert^2$ for all $x\in\Re^N$ which implies that $\rho(I-\eps\L_0)\leq 1$. But $\L_0\ones = \ones$, so $\rho(I-\eps\L_0) = 1$. Therefore $\rho(\Gamma_a) = 1$. This completes the proof.

\subsection{Proof of Theorem \ref{thm: sym}}
\label{subsec: sym}
We will begin by identifying the state of the protocol in Algorithm \ref{alg: repetition}. For the sake of clarity, we will refer to nodes using letters $u,v$, etc., instead of $i,j$. Recall that $\mN_v$ denotes the set of neighbors of $v$. For each node $v$ at time $t$ (i.e., after round $t$), we associate $|\mN_v|$ variables $\{n_{vu}(t)\}_{u\in \mN_v}$, where $n_{vu}(t)$ denotes the latest iterate of node $u$ that is available to node $v$ at time $t$. In other words, $n_{vu}(t)$ is the largest integer $\tau$ such that $\xx[u]_{\tau}$ is available to node $v$. We further define
\begin{align}
 \label{eq: defnu}
n_v(t) \triangleq 1 + \min_{u\in \mN_v}n_{vu}(t)
\end{align}
Note that $n_v(t)$ is the latest iteration of \eqref{eq: noiseless} that node $v$ can compute at time $t$. In other words, node $v$ has computed $\{\xx[v]_{\tau}\}_{\tau \leq n_v(t)}$ and no more. With this setup, it is clear that Algorithm \ref{alg: repetition} would have executed $\min_{v}n_v(t)$ iterations of \eqref{eq: noiseless} till time $t$. Note that the rate of the protocol is then given by $R = \lim_{t\rightarrow \infty}\frac{min_{v}n_v(t)}{t}$, which is a random variable for a specific run of the protocol. We now state the evolution of $n_{vu}(t)$ as a Lemma below.

\begin{lem}
 \label{lem: stateEvol}
Let $X^{vu}_t = 1$ if the edge $(v,u)$ is erased in round $t$ and $0$ otherwise. Then the evolution of $n_{vu}(t)$ is given by the following equation
\begin{align}
 \label{eq: stateEvol}
n_{vu}(t+1) = n_{vu}(t) + X^{vu}_{t+1}\indicator{n_u(t) > n_{vu}(t)}
\end{align}
\end{lem}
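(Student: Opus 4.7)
The plan is a case split on whether node $u$ currently holds any iterate that $v$ has not yet received, which is exactly the predicate $n_u(t) > n_{vu}(t)$. On the channel side I will read $X^{vu}_{t+1}$ as the success indicator of the link $u\to v$ in round $t+1$, consistent with the convention in Table \ref{tab: notation}; the claim is then that $n_{vu}$ advances by one iff the success indicator fires and there is new data to send.

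The key preliminary is an invariant on each directed pair $(u,v)$: the iterates that $v$ has decoded from $u$ form a contiguous prefix $\xx[u]_0,\ldots,\xx[u]_{n_{vu}(t)}$, and the largest iterate index that $u$ has ever attempted to send to $v$ is either $n_{vu}(t)$ (if the last data attempt was received) or $n_{vu}(t)+1$ (if that attempt was erased and is currently being retransmitted). This is forced by lines 1--2 of Algorithm \ref{alg: repetition}, which retransmit precisely the last non-wait packet whenever it was erased, combined with symmetric feedback, which lets $u$ infer after round $t$ whether its packet went through from its own received symbol. I would prove this invariant by induction on $t$, treating wait symbols separately since they neither trigger retransmission nor advance the in-flight data pointer.

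With the invariant in hand, both cases are immediate. If $n_u(t) \le n_{vu}(t)$, then $u$ has no iterate beyond what $v$ already holds; the invariant rules out any outstanding non-wait packet, so lines 4--9 of Algorithm \ref{alg: repetition} fall through to transmitting a wait, and $n_{vu}(t+1) = n_{vu}(t)$ regardless of $X^{vu}_{t+1}$, matching the formula since the indicator vanishes. If instead $n_u(t) > n_{vu}(t)$, the invariant identifies the packet transmitted from $u$ to $v$ in round $t+1$ as $\xx[u]_{n_{vu}(t)+1}$, either as a retransmission via line 2, or as a newly computed iterate via line 7 since the inequality $n_u(t) \ge n_{vu}(t)+1$ guarantees that the iterate has already been computed by $u$; then $n_{vu}(t+1) = n_{vu}(t) + X^{vu}_{t+1}$, as required.

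The main obstacle I foresee is the inductive step for the invariant. The delicate point is that wait symbols can themselves be erased yet are never retransmitted, and one must verify that these erasures cannot silently drop a data iterate or let the pipeline drift out of order. Once this is checked, identifying the round-$(t+1)$ transmission in the nontrivial case is mechanical, and the recursion \eqref{eq: stateEvol} follows.
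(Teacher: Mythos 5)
Your proposal is correct and takes essentially the same route as the paper, which records the same facts (the retransmission logic, symmetric feedback giving $u$ knowledge of $n_{vu}$ at all times, and the wait condition $n_{vu}(t)=n_u(t)$) as a short list of observations rather than packaging them into an invariant proved by induction. Your reading of $X^{vu}_t$ as a success indicator is the right one: the lemma's phrasing conflicts with Table~\ref{tab: notation}, and \eqref{eq: stateEvol} only makes sense under the Table~\ref{tab: notation} convention.
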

\begin{proof}
The proof follows from the following simple observations
\begin{enumerate}
 \item $n_{vu}(t)$ increases by atmost $1$ in each step
 \item In any round, if node $u$ receives an erasure on a link, it will infer that its transmission on that link was also erased. As a result, node $u$ has knowledge of $n_{vu}(t)$ at all times $t$
 \item In round $t+1$, if either the edge $(v,u)$ is erased or node $u$ sends a $\itw$ to node $v$, then $n_{vu}(t+1) = n_{vu}(t)$ 
 \item Node $u$ sends a \lq wait' $\itw$ to node $v$ in round $t+1$ if and only if $n_{vu}(t) = n_u(t)$.
\end{enumerate}. 
\end{proof}

We say that round $t$ got wasted at node $v$ if $n_v(t-1) = n_v(t)$, i.e., node $v$ could not perform a new iteration of \eqref{eq: noiseless} at time $t$. The proof idea is as follows: for each node $v$ at time t, we will argue that there exists a sequence of $t$ edges of which at least $t - n_v(t)$ edges have failed. We then union bound over all possible choices of such $t$ edges. 

Before proceeding further, we define an object which we call the \lq trellis', for lack of a better word. Associated to any undirected graph $\mG = (\mV,\mE)$ represented by the adjacency matrix $A$, we define an infinite trellis $\mT({\mG}) = (\mV_{\mT},\mathcal{E}_{\mT})$ as follows. Associated to each node $v$ in $\mV$, there are countably infinitely many copies $\{_k\}_{k\geq 0}$ in $\mV_\mT$. Let $I$ denote a $|\mV|\times|\mV|$ identity matrix. Then the nodes $\mV_\mT$ and edges $\mE_\mT$ of $\mT(\mG)$ are given by
\begin{subequations}
 \label{eq: trellis}
\begin{align}
 \label{eq: verts}\mV_\mT &= \bigcup_{v\in\mV}\bigcup_{k\geq 0}\{v_k\}\\
 \label{eq: links}\mE_\mT &= \left\{(v_\tau,u_{\tau'})\mid |\tau-\tau'| = 1, \bra{A+I}_{vu} = 1\right\}
\end{align}
\end{subequations}
The edges in $\mE_\mT$ are all undirected, i.e., $(u_0,v_1)$ and $(v_1,u_0)$ are treated as a single edge. The trellis for an example network is given in Fig \ref{fig: graphTrellisTrim}. 

\begin{define}[time-like]
\label{def: timelike}
Any sequence of edges (or a path), $\mS_t$, in the trellis $\mT(\mG)$ of the type
\begin{align*}
\mS_t = \left\{(v_t,u^{(t-1)}_{t-1}),(u^{(t-1)}_{t-1},u^{(t-2)}_{t-2}),\ldots,(u^{(1)}_1, u^{(0)}_0)\right\}
\end{align*}
will be called \lq time-like' ending in node $v_t$ 
\end{define}
An edge $(u^{(\tau)}_\tau, u^{(\tau-1)}_{\tau-1}) \in \mE_\mT$ is said to be erased if there was an erasure on the edge $(u^{(\tau)},u^{(\tau-1)}) \in \mE$ in round $\tau$. The time-like sequence $\mS_t$ is said to have $\ell$ erasures if $\ell$ of the $t$ edges in $\mS_t$ were erased. We are now ready to state the key Lemma from which the proof of Theorem \ref{thm: sym} follows easily. 
\begin{lem}
\label{lem: timeLike}
 If after $t$ rounds of communication, node $v$ has performed $n_{v}(t)$ iterations of \eqref{eq: noiseless}, then there exists a time-like sequence of $t$ edges ending in node $v_t$ that have at least $t - n_{v}(t)$ erasures among them.   
\end{lem}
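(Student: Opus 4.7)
The plan is to prove Lemma~\ref{lem: timeLike} by induction on $t$, the number of communication rounds. The base case $t=0$ is an empty path satisfying $0 \ge 0 - n_v(0)$.

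For the inductive step at $(v, t)$, I would split based on whether $v$ advanced in round $t$. If $v$ advanced, i.e.\ $n_v(t) = n_v(t-1) + 1$, I apply the IH at $v_{t-1}$ and extend the resulting path with the self-loop $(v_t, v_{t-1})$, which is non-erased; the required count $t - n_v(t) = (t-1) - n_v(t-1)$ carries over. If $v$ is stuck, I pick a minimizing neighbor $u^*$ with $n_{vu^*}(t) = n_v(t) - 1$. By Lemma~\ref{lem: stateEvol} this minimizing failure has one of two causes: (i) $u^*$ sent a wait, i.e.\ $n_{u^*}(t-1) \le n_v(t) - 1$, so the IH at $u^*_{t-1}$ already supplies $\ge (t-1) - (n_v(t)-1) = t - n_v(t)$ erasures and extending by the non-erased edge $(v_t, u^*_{t-1})$ preserves the count; or (ii) $X_t^{vu^*} = 0$, and the general bound $n_{u^*}(t-1) \le 1 + n_{u^* v}(t-1) \le 1 + n_v(t-1) = n_v(t)+1$ gives $n_{u^*}(t-1) \in \{n_v(t), n_v(t)+1\}$. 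If $n_{u^*}(t-1) = n_v(t)$, the IH at $u^*_{t-1}$ gives $\ge t - n_v(t) - 1$ erasures and the erased edge $(v_t, u^*_{t-1})$ supplies the missing $+1$.

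The main obstacle is the remaining subcase $n_{u^*}(t-1) = n_v(t) + 1$ together with $X_t^{vu^*} = 0$, where a single erased edge cannot close the two-unit gap. My plan is to exploit the retransmission discipline of Algorithm~\ref{alg: repetition}. Writing $T^* := T_{u^*}(n_v(t))$ for the round in which $u^*$ first computed $\xx[u^*]_{n_v(t)}$, the algorithm forces $u^*$ to retransmit this non-wait packet after every erasure until success. But any successful round in $[T^*, t]$ on the edge $v - u^*$ would deliver $\xx[u^*]_{n_v(t)}$ to $v$ and force $n_{vu^*}(t) \ge n_v(t)$, contradicting the minimizing property $n_{vu^*}(t) = n_v(t) - 1$. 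Hence every one of the $t - T^* + 1$ rounds in $[T^*, t]$ is erased on the edge $v - u^*$.

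I would then extend the path by a zig-zag $v_t \to u^*_{t-1} \to v_{t-2} \to u^*_{t-3} \to \cdots$ across these consecutive erasures and apply the IH at the landing vertex at time $T^*-1$. Depending on the parity of $t - T^* + 1$, the landing vertex is either $u^*_{T^*-1}$, where $n_{u^*}(T^*-1) = n_v(t) - 1$ and the IH gives $\ge T^* - n_v(t)$ further erasures, or $v_{T^*-1}$, where $n_v(T^*-1) \le n_v(t)$ and the IH gives $\ge T^* - 1 - n_v(t)$ further erasures; combined with the zig-zag's $t - T^* + 1$ erasures the total is $\ge t - n_v(t)$ in both parities, closing the induction. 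The most delicate bookkeeping in writing this up is carefully verifying both parity cases and the exact value of $T^*$, but the burst of consecutive erasures induced by the retransmission protocol is what closes the otherwise-stubborn gap.
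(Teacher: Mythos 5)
Your overall structure (induction on $t$, split on whether node $v$ advanced, then on the cause of the stall) is the same as the paper's, and your cases~(i) and the first half of~(ii) match the paper's cases~2(a) and~2(b)(i). The genuine problem is the last subcase, $n_{u^*}(t-1) = n_v(t)+1$ together with $X_t^{vu^*}=0$.

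Your key claim there --- that every round in $[T^*,t]$ is erased on the edge $(v,u^*)$, where $T^*$ is the round in which $u^*$ first computed $\xx[u^*]_{n_v(t)}$ --- does not hold. At time $T^*$ one only has $n_{u^*}(T^*)=n_v(t)$, and the chain $n_{u^*}(\tau)\le 1+n_{u^*v}(\tau)\le 1+n_v(\tau)\le 1+1+n_{vu^*}(\tau)$ allows $n_{vu^*}(T^*)=n_v(t)-2$. In that situation $u^*$ is still busy delivering the \emph{older} packet $\xx[u^*]_{n_v(t)-1}$, and since $n_{vu^*}$ must climb to $n_v(t)-1$ by time $t$, at least one round in $(T^*,t]$ is in fact \emph{successful} on $(v,u^*)$. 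So the zig-zag does not accumulate $t-T^*+1$ erasures. If you try to repair this by redefining $T^*$ as the first round in which $u^*$ actually transmits $\xx[u^*]_{n_v(t)}$ (so that ``all erased'' becomes true), you lose the other ingredient you use, namely $n_{u^*}(T^*-1)=n_v(t)-1$; at that corrected $T^*$ one can have $n_{u^*}(T^*-1)\ge n_v(t)$, and then the induction hypothesis at the landing vertex $u^*_{T^*-1}$ supplies only $\ge T^*-1-n_{u^*}(T^*-1)\le T^*-1-n_v(t)$ erasures, which together with the zig-zag totals $\le t-n_v(t)$ without giving the needed inequality in the right direction for all parities. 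The two requirements (``all rounds erased'' and ``landing-vertex count is small'') pull $T^*$ in opposite directions, and your writeup does not resolve this tension.

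The paper's route for this subcase is more local and does not require an unbounded lookback at all. It observes that the chosen bottleneck $u\in B\cap Z$ satisfies $n_u(t-1)=n_v(t-1)+1$ and $n_{vu}(t-1)=n_v(t-1)-1$, so by Lemma~\ref{lem: stateEvol} node $u$ was transmitting iterate $n_v(t-1)$ to $v$ in \emph{both} rounds $t-1$ and $t$ and both were erased; using the symmetry of the erasure ($X^{vu}_\tau=X^{uv}_\tau$) those two erased rounds give two erased trellis edges $(v_t,u_{t-1})$ and $(u_{t-1},v_{t-2})$, which are appended to the inductive witness at $v_{t-2}$. Since $n_v(t-2)\le n_v(t)$, the count closes immediately. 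So the paper spends the ``gap of two'' with a depth-two zig-zag, whereas you try to trace the whole backlog back to its origin --- an approach that, as written, both overclaims the erasure run and mismanages the boundary count. If you want to salvage the long-lookback idea you would need to (a) define $T^*$ as the start of the maximal run of consecutive erasures on $(v,u^*)$ ending at $t$, and (b) prove a sharper bound on $n_{u^*}(T^*-1)$ at that point, neither of which you currently do.
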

We will first prove Theorem \ref{thm: sym} using Lemma \ref{lem: timeLike}. Suppose after $t$ communication rounds, node $v$ performed $Rt$ iterations of \eqref{eq: noiseless}, for some $R < 1-p$. Recall that the probability of an erasure is $p$. Then there must be a time like sequence of $t$ edges with at least $(1-R)t$ erasures, the probability of which is approximately $2^{-tD(1-R, p)}$, where $D(q,p) = q\log(q/p) + (1-q)log(1-q/1-p)$. Now there are at most $(\Delta+1)^t$ choices of such time-like sequences. Then, doing a union bound over all these sequences, we get
\begin{align}
 P_{R,t} \leq N(\Delta+1)^t2^{-tD(1-R,p)}
\end{align}
where $P_{R,t}$ is the probability that the network performed $Rt$ or fewer iterations of \eqref{eq: noiseless} in $t$ rounds and $N$ is the number of nodes in the network. This is the claim in Theorem \ref{thm: sym}. We will now prove the Lemma.

\begin{proof}[Proof of Lemma \ref{lem: timeLike}]

For ease of presentation, we will introduce the following notation in the rest of the proof.
\begin{enumerate}[a)]
 \item {\em we will refer to any time-like sequence of $\tau$ edges ending in $v_\tau$ that has $\tau-n_v(\tau)$ or more erasures as a \lq\lq witness'' at $v_\tau$.}
 \item {\em We will call a node $u\in\mN_v$ a \lq\lq bottleneck'' for node $v$ in round $t$ iff $n_{vu}(t-1) = n_v(t-1)-1$, i.e., $n_{vu}(t-1) = min_{u'\in\mN_v}n_{vu'}(t-1)$.}
\end{enumerate}

The Lemma claims that there is a witness at $v_t$ for all $v\in\mV$ and $t\geq 0$. We will prove this by induction. The hypothesis is clearly true for $t=0$. Suppose it is true for all nodes $v\in\mV$ and all $\tau\leq t-1$. Recall that we say that round $t$ at node $v$ is wasted only if $n_v(t-1) = n_v(t)$. There are two broad cases, round $t$ gets wasted at node $v$ or it does not.

\begin{enumerate}[1)]
 \item Suppose round $t$ is not wasted, i.e., $n_v(t) = n_v(t-1)+1$. Then by the induction hypothesis, there is a witness at $v_{t-1}$. Appending the edge $(v_{t-1},v_t)$ to this witness gives us a witness for $v_t$.
 \item It remains to consider the case where round $t$ gets wasted at node $v$, i.e., $n_v(t) = n_v(t-1)$. 
\end{enumerate}

We will divide case 2) above into two sub-cases: a) $\exists$ a $u\in\mN_v$ s.t $n_u(t-1) = n_v(t-1)-1$ and b) such a neighbor does not exist.
\begin{enumerate}[a)]
 \item If there is a neighbor $u\in\mN_v$ such that $n_u(t-1) = n_v(t-1)-1$, then the witness for $v_t$ is obtained by appending the edge $(v_t,u_{t-1})$ to the witness at $u_{t-1}$. 
 \item Here $n_{u}(t-1) \geq n_v(t-1)$ for all $u\in\mN_v$. Since $|n_u(\tau) - n_v(\tau)| \leq 1$ for any $\tau$, we can partition the neighbors of $v$ into two classes $Y = \{u\in\mN_v\mid n_u(t-1)=n_v(t-1)\}$ and $Z = \{u\in\mN_v\mid n_u(t-1) = n_v(t-1)+1\}$. Furthermore, let $B = \{u\in\mN_v\mid n_{vu}(t-1) = n_v(t-1)-1\}$ denote the bottlenecks for $v$ in round $t$. 
\end{enumerate}

We will further divide case b) above into two sub-cases: i) $B\cap Z = \emptyset$ and ii) $B\cap Z \neq \emptyset$
\begin{enumerate}[i)]
 \item $B\cap Z = \emptyset$, i.e., there are no bottlenecks in the set of neighbors $Z$. Observe that a bottleneck neighbor will not send a wait \itw. Also for any $u\in B\cap Y$, $n_{vu}(t-1) = n_v(t-1)-1 = n_u(t-1)-1$. So, the data transmitted by node $u$ to node $v$ in round $t$ is $\xx[u]_{n_u(t)}$, i.e., iteration $n_u(t)$ of \eqref{eq: noiseless}. Since round $t$ at node $v$ got wasted, at least one of the edges to a bottleneck neighbor must have been erased in round $t$. Otherwise, node $v$ would have been able to compute a new iteration of \eqref{eq: noiseless} and the round would not have been wasted. Suppose the erasure happened on edge $(v,u)$ for some $u\in B\cap Y$. Then appending edge $(v_t,u_{t-1})$ to the witness at $u_{t-1}$ will give us the witness at $v_t$.
 \item $B\cap Z \neq \emptyset$, i.e., there is a neighbor $u\in B\cap Z$ such that $n_{u}(t-1) = n_v(t-1)+1$ and $n_{vu}(t-1) = n_v(t-1)-1 = n_u(t-1)-2$. Furthermore, there must be a neighbor $u \in B\cap Z$ whose transmission to $v$ in round $t$ must have been erased (else there must be an edge to $B\cap Y$ which was erased and we revert back to case i)). Note that $n_u(t-2) \geq n_v(t-1)$. It follows from Lemma \ref{lem: stateEvol} that node $u$ must have transmitted iteration $n_v(t-1)$ in round $t-1$ as well as round $t$ and both were erased since $n_{vu}(t) = n_{vu}(t-1) = n_v(t-1)-1$. Since this erasure model considers symmetric erasures, the transmission from $v$ to $u$ in round $t-2$ is also erased. Appending the edges $(v_t,u_{t-1})$ and $(u_{t-1},v_{t-2})$ to the witness at $v_{t-2}$ gives us the witness for $v_t$.  
\end{enumerate}
This completes the proof of Lemma \ref{lem: timeLike}.

\end{proof}

\begin{figure}
\centering
\subfigure[An example network]{\includegraphics[scale = 0.278]{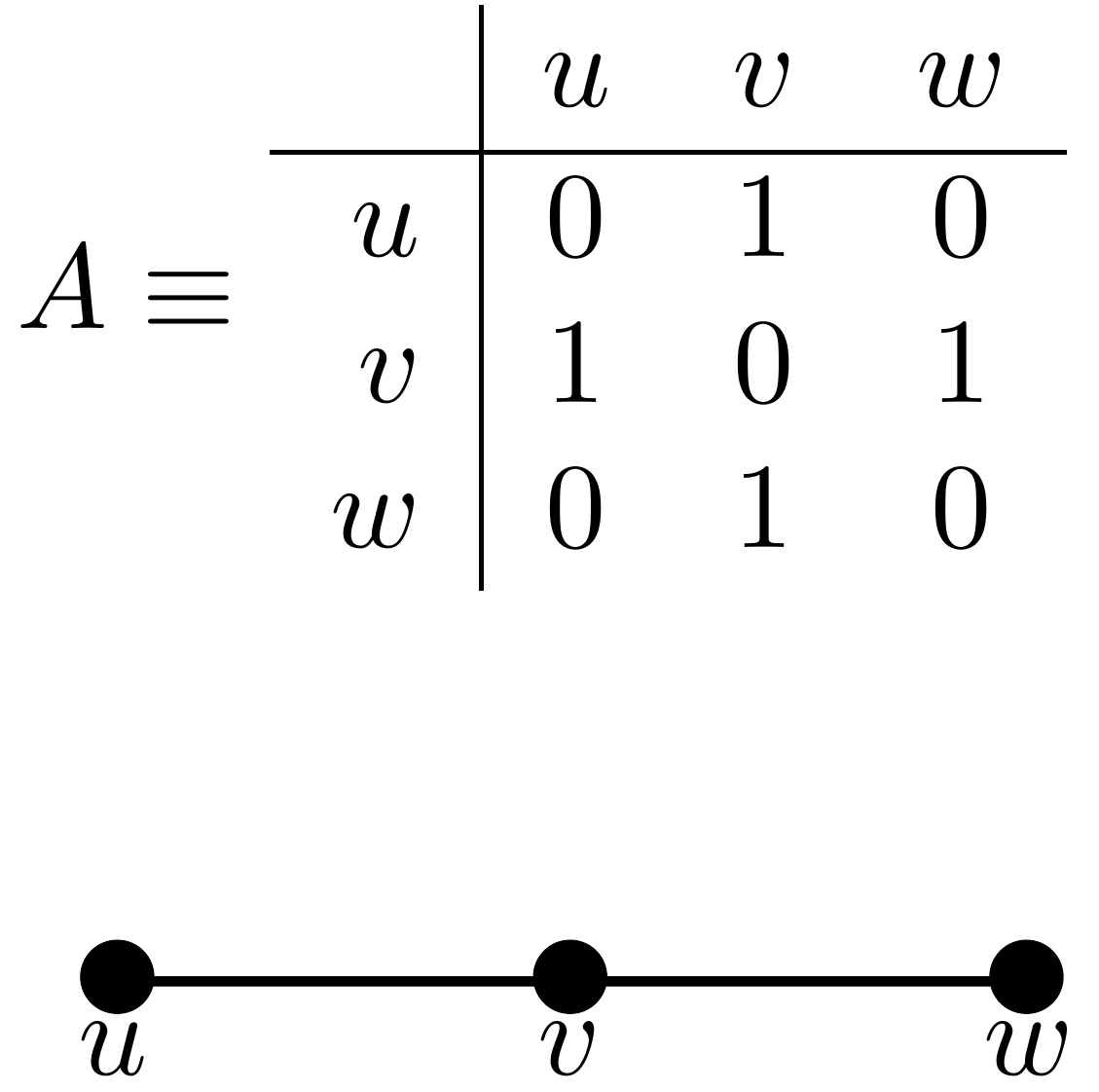}
\label{fig: graphTrim}
}
\subfigure[Trellis associated to the network in (a)]{\includegraphics[scale = 0.278]{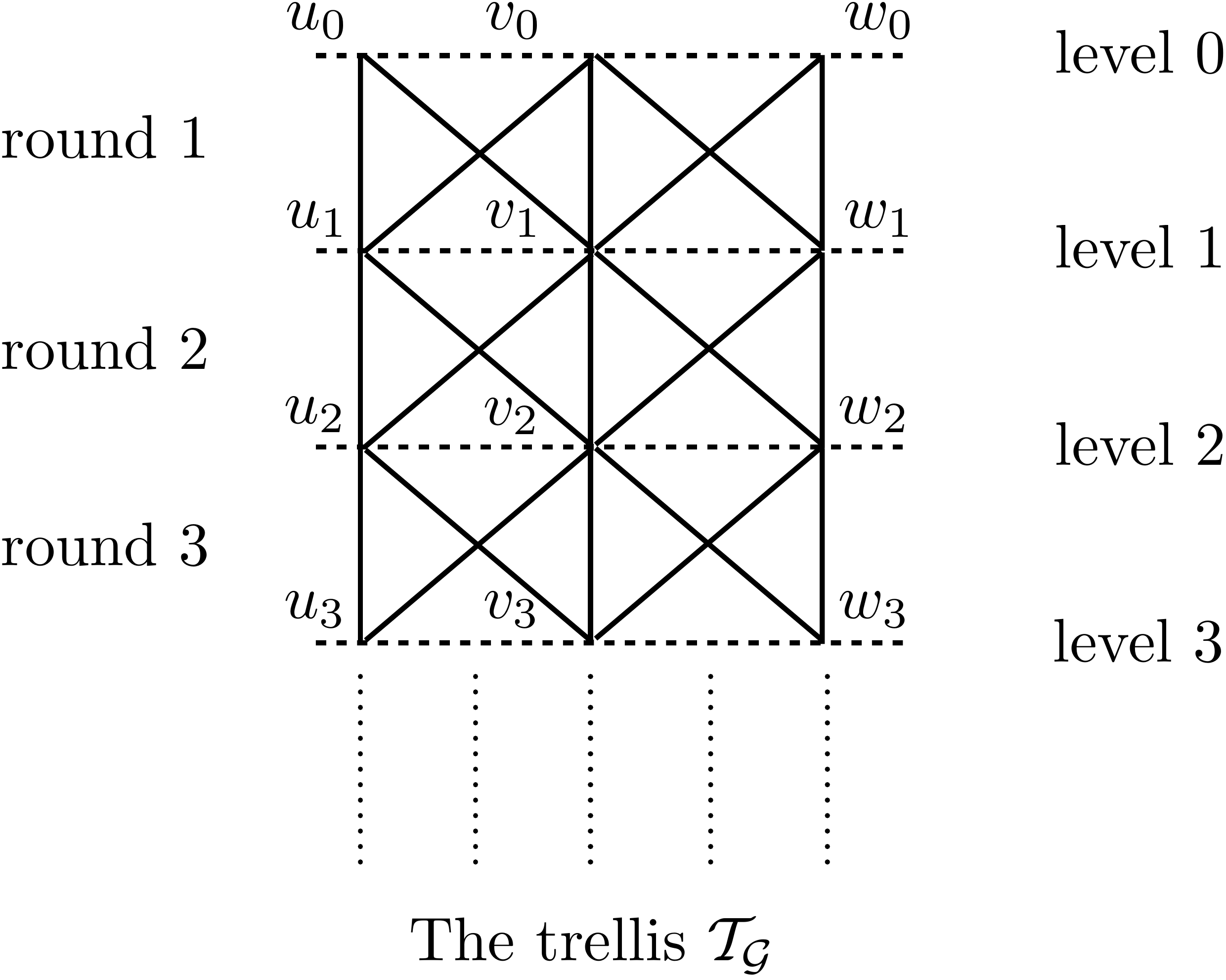}
\label{fig: trellisTrim}
}
\caption{This depicts the trellis associated to a network of three nodes connected in a straight line. The thick lines represent edges.}
\label{fig: graphTrellisTrim}
\end{figure}

\subsection{Proof of Theorem \ref{thm: asym}}
\label{subsec: asym}
We will begin the proof with three preliminary results before moving to the main argument. Recall that an $(R,\beta)-$anytime reliable code is one that guarantees $P\bra{\hat{b}_{\tau|t}\neq b_\tau} \leq 2^{-\beta(t-\tau+1)}$. For such a code that is linear, we can say the following.
\begin{lem}
 \label{lem: disjointIntervals}
Suppose $\{b_i\}_{i\geq 0}$ are encoded and decoded using a causal linear $(R,\beta)-$anytime reliable code. Consider the following events, $Y(\tau_1',\tau_1)$: $\tau_1' = 1 + \argmax_{\ell}\{\hat{b}_{\ell|\tau_1} = b_\ell\}$ and $Y(\tau_2',\tau_2)$: $\tau_2' = 1 + \argmax_{\ell}\{\hat{b}_{\ell|\tau_2} = b_\ell\}$, i.e., $Y(\tau_i',\tau_i)$ is the event that at decoding instant $\tau_i$, the position of the earliest error is at $\tau_i'$ for $i=1,2$. Furthermore, suppose that the intervals $[\tau_1',\tau_1]$ and $[\tau_2', \tau_2]$ are disjoint. Then we have
\begin{align}
\label{eq: disjointIntervals} P\bra{Y(\tau_1',\tau_1)\cap Y(\tau_2',\tau_2)} \leq 2^{-\beta\bra{|\tau_1-\tau_1'+1|+|\tau_2 - \tau_2'+1|}}
\end{align} 
The probability above is only over the randomness of the channel.
\end{lem}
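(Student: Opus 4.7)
The plan is to reduce the joint event $Y(\tau_1',\tau_1)\cap Y(\tau_2',\tau_2)$ to a product of two auxiliary events that depend on disjoint, and hence independent, channel erasure windows, after which the anytime reliability hypothesis can be applied to each factor separately.

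First, I would recast the earliest-error event in linear-algebraic terms. Because the code is linear and ML decoding on an erasure channel amounts to solving a linear system, $\hat{b}_{\ell|\tau}\neq b_\ell$ exactly when the right null space of the submatrix formed by the non-erased equations $\{t\leq\tau : c_t\text{ received}\}$ contains a nonzero $\delta$ with $\delta_\ell\neq 0$. Under $Y(\tau_i',\tau_i)$, the decoder recovers $b_1,\ldots,b_{\tau_i'-1}$ correctly, which forces any witnessing $\delta$ to satisfy $\delta_\ell=0$ for $\ell<\tau_i'$. Combined with the causality of the encoding ($c_t$ does not involve $b_\ell$ for $\ell>t$), this lets us take $\delta$ to be supported on $[\tau_i',\tau_i]$ with $\delta_{\tau_i'}\neq 0$, and the constraints $\sum_{\ell=\tau_i'}^{t}G_{t-\ell+1}\delta_\ell=0$ need only be verified for the non-erased $t\in[\tau_i',\tau_i]$; the equations for $t<\tau_i'$ are trivially satisfied.

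Second, let $E_i$ denote this restricted event, so that $Y(\tau_i',\tau_i)\subseteq E_i$. Crucially, $E_i$ is a deterministic function of the erasure pattern on $[\tau_i',\tau_i]$ alone. Since the two intervals are disjoint and the channel erasures are i.i.d.\ across time, $E_1$ and $E_2$ are independent. Moreover, any $\delta$ witnessing $E_i$, extended by zeros to the positions $1,\ldots,\tau_i'-1$, still lies in the full unrestricted null space at time $\tau_i$ with $\delta_{\tau_i'}\neq 0$, so $E_i\subseteq\{\hat{b}_{\tau_i'|\tau_i}\neq b_{\tau_i'}\}$, and the anytime reliability hypothesis yields $P(E_i)\leq 2^{-\beta(\tau_i-\tau_i'+1)}$.

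Combining these ingredients, $P\bra{Y(\tau_1',\tau_1)\cap Y(\tau_2',\tau_2)}\leq P(E_1\cap E_2)=P(E_1)\,P(E_2)\leq 2^{-\beta((\tau_1-\tau_1'+1)+(\tau_2-\tau_2'+1))}$, which is the claimed bound. The main obstacle is the first step: verifying that under the earliest-error condition the witnessing null-space vector can always be taken with support inside $[\tau_i',\tau_i]$, since this is precisely what isolates $E_i$ from the erasures outside that interval and thereby unlocks the independence argument. Once this localization is in hand, the remainder reduces to independence of disjoint i.i.d.\ erasure blocks and a direct application of the given anytime reliability.
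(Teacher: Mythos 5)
Your proof is correct and follows the same overall plan as the paper: identify intermediate events $E_i \supseteq Y(\tau_i',\tau_i)$ that depend only on the erasure pattern in the disjoint window $[\tau_i',\tau_i]$, invoke independence of those windows, and bound each $P(E_i)$ by $2^{-\beta(\tau_i-\tau_i'+1)}$. Where you diverge is in the mechanism for that last bound. The paper appeals to the time-invariance of the code together with linearity: it slides the offending erasure pattern from $[\tau_i',\tau_i]$ back to the start of the timeline and invokes anytime reliability at decoding instant $\tau_i-\tau_i'$, position $0$. You instead give a precise null-space characterization of $E_i$ (existence of a $\delta$ supported on $[\tau_i',\tau_i]$ with $\delta_{\tau_i'}\neq 0$ annihilating the non-erased parity checks in that window), verify via causality that such a $\delta$, extended by zeros, lies in the full null space at time $\tau_i$, and conclude $E_i \subseteq \{\hat{b}_{\tau_i'\mid\tau_i}\neq b_{\tau_i'}\}$; the anytime bound is then applied in place, without any shift. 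Your version is a bit more rigorous about why $E_i$ is measurable with respect to the erasures in $[\tau_i',\tau_i]$ alone (the paper's phrase ``the portion of the erasure pattern ... that resulted in $Y$'' is informal, and your null-space condition is exactly what makes it precise), and it has the minor advantage of not needing time-invariance of the code, only causality and linearity. Both proofs otherwise coincide.
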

\begin{proof}
 Without loss of generality, assume that $\tau_2' > \tau_1$. Due to linearity, we can assume without losing generality that the input $b_i=0$ for $i\geq 0$. Let $E_i$ denote the portion of the erasure pattern introduced by the channel during the interval $[\tau_i',\tau_i]$ that resulted in the event $Y(\tau_i',\tau_i)$. Then, we claim that $P(E_i) \leq 2^{-\beta|\tau_i-\tau_i'+1|}$. This follows from the simple observation that if the encoder input in the first $\tau_i-\tau_i' + 1$ instants is all zero and the corresponding channel erasure pattern is $E_i$, then $Y(\tau_i',\tau_i)$ implies that at the decoding instant $\tau_i-\tau_i'$, the earliest error would have happened at time $0$, the probability of which is at most $2^{-\beta|\tau_i-\tau_i'+1|}$. 

Since the intervals $[\tau_1',\tau_1]$ and $[\tau_2', \tau_2]$ are disjoint, the erasure patterns $E_1$ and $E_2$ correspond to independent channel uses. So we have
\begin{align*}
 P\bra{Y(\tau_1',\tau_1)\cap Y(\tau_2',\tau_2)} \leq P(E_1,E_2) = P(E_1)P(E_2) 
\end{align*}
 The result now follows.
\end{proof}

For ease of presentation, we introduce the following definition
\begin{define}[Error Interval]
 With respect to the notation in Lemma \ref{lem: disjointIntervals}, we refer to the interval $[\tau_i',\tau_i]$ as the error at time $\tau_i$. 
\end{define}

Before proceeding with the rest of the proof, we will recall a Lemma from \cite{Schulman96} and state it here for easy reference. 
\begin{lem}[Lemma 7, \cite{Schulman96}]
\label{lem: Schulman96}
In any finite set of intervals on the real line whose union $J$ is of total length $s$ there is a subset of disjoint intervals whose union is of total length at least $s/2$
\end{lem}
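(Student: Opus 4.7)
The plan is to reduce the claim to the case where the union $J$ is a single interval, since every connected component of $J$ can be handled independently (any interval of the collection lies entirely inside one component, and the disjoint sub-families obtained from different components combine into a single disjoint sub-family whose lengths add). With this reduction, write $J=[c,d]$ with $d-c=s$. I would then exhibit two disjoint sub-families $D_1,D_2$ whose combined union still covers $J$; once such a pair is in hand, subadditivity gives $\sum_{I\in D_1}|I|+\sum_{I\in D_2}|I|\geq |J|=s$, so the larger of the two already has total length at least $s/2$, which is exactly the lemma.

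To construct $D_1$ and $D_2$, I would run the standard greedy covering procedure. Set $p_0=c$; at step $k\geq 1$, among all intervals of the collection satisfying $a\leq p_{k-1}\leq b$, pick the one $I_{(k)}=[a_{(k)},b_{(k)}]$ with the largest right endpoint, and update $p_k=b_{(k)}$. Finiteness of the collection together with connectedness of $J$ forces strict progress $p_k>p_{k-1}$ at every step until $p_m=d$, producing a sequence $I_{(1)},\ldots,I_{(m)}$ whose union covers $J$. I would then take $D_1$ to be the odd-indexed selections $\{I_{(1)},I_{(3)},\ldots\}$ and $D_2$ the even-indexed ones.

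The only substantive step is to verify that each $D_i$ really is a disjoint family, i.e., $I_{(k)}\cap I_{(k+2)}=\emptyset$. I would argue by contradiction: if $a_{(k+2)}\leq b_{(k)}=p_k$, then at step $k+1$ the interval $I_{(k+2)}$ was itself an eligible candidate. Since the greedy makes strict progress we have $b_{(k+2)}>b_{(k+1)}$, so $I_{(k+2)}$ had a strictly larger right endpoint than the interval actually selected at step $k+1$, contradicting the greedy rule. Hence $a_{(k+2)}>b_{(k)}$ and the two intervals are strictly disjoint.

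The main obstacle I anticipate is not conceptual but a small bookkeeping point: confirming that the greedy procedure always strictly advances ($p_k>p_{k-1}$ whenever $p_{k-1}<d$), which rests on finiteness of the collection together with connectedness of $J$---if strict progress ever failed, a gap in $J$ would be forced, contradicting that $J$ is an interval. Everything else follows essentially in one line once the alternate-indices-are-disjoint claim is established.
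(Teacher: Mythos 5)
The paper does not prove this lemma; it is invoked as a black box cited from Schulman's 1996 paper (his Lemma~7), so there is no in-paper proof to compare against. Your argument is a correct, self-contained proof via the standard one-dimensional Vitali/greedy covering device: reduce to a connected union $J=[c,d]$, greedily select at each step an interval through the current frontier $p_{k-1}$ with maximal right endpoint, observe that alternating selections are disjoint, and split the resulting chain into odd- and even-indexed sub-families, one of which must carry at least half of $|J|$.

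Two small points are worth making explicit. First, the strict-progress claim $p_k>p_{k-1}$ (for $p_{k-1}<d$) deserves a sentence: by finiteness, the set of left endpoints strictly to the right of $p_{k-1}$ has an attained minimum $\alpha>p_{k-1}$, so if every interval through $p_{k-1}$ ended at $p_{k-1}$ the set $(p_{k-1},\alpha)$ would be uncovered, contradicting that $J$ is an interval. Second, your verification shows only $I_{(k)}\cap I_{(k+2)}=\emptyset$, but pairwise disjointness of the whole odd (resp.\ even) family is what is needed. This follows once you also note $b_{(k)}=p_k$ is strictly increasing: for any $j\geq k+2$ one gets $a_{(j)}>b_{(j-2)}\geq b_{(k)}$, so $I_{(j)}$ lies strictly to the right of $I_{(k)}$. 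With those two points spelled out, the proof is complete.
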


We will now state a version of Lemma \ref{lem: disjointIntervals} when the error intervals are not necessarily disjoint.
\begin{lem}
 \label{lem: intervalThingy}
If $\{b_i\}_{i\geq 0}$ are encoded and decoded using a causal linear $(R,\beta)-$anytime reliable code, then
\begin{align*}
 P\bra{\hat{b}_{\tau_1'|\tau_1}\neq b_{\tau_1'},\ldots,\hat{b}_{\tau_m'|\tau_m}\neq b_{\tau_m'}} \leq 2^{-\frac{\beta(\sum_i |\tau_i - \tau_i' + 1|)}{2}}
\end{align*}
\end{lem}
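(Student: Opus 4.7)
The plan is to reduce the general (possibly overlapping) case to the pairwise-disjoint case already handled by Lemma~\ref{lem: disjointIntervals}, using Schulman's combinatorial packing estimate (Lemma~\ref{lem: Schulman96}) as the bridge. To each index $i$ I would associate the error interval $I_i = [\tau_i', \tau_i] \subset \Re$. As in the proof of Lemma~\ref{lem: disjointIntervals}, I would first invoke linearity to reduce to the all-zero codeword case, under which the event $\{\hat{b}_{\tau_i'|\tau_i}\neq b_{\tau_i'}\}$ is determined solely by the portion of the channel erasure pattern falling inside $I_i$, and has probability at most $2^{-\beta |I_i|}$.

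Second, I would apply Lemma~\ref{lem: Schulman96} to the finite family $\{I_i\}_{i=1}^m$ to extract a sub-family $\mathcal{D}\subseteq\{1,\ldots,m\}$ of pairwise disjoint intervals whose cumulative length is at least one half of the relevant aggregate length. Since the full intersection $\bigcap_{i=1}^m\{\hat{b}_{\tau_i'|\tau_i}\neq b_{\tau_i'}\}$ trivially implies the sub-intersection $\bigcap_{i\in\mathcal{D}}\{\hat{b}_{\tau_i'|\tau_i}\neq b_{\tau_i'}\}$, passing to the sub-family only enlarges the probability, so it is enough to bound the sub-event. On $\mathcal{D}$ the associated channel slices live in disjoint time windows and are therefore independent, so a straightforward $|\mathcal{D}|$-fold iteration of Lemma~\ref{lem: disjointIntervals} yields
\begin{align*}
 P\bra{\bigcap_{i\in\mathcal{D}} \{\hat{b}_{\tau_i'|\tau_i}\neq b_{\tau_i'}\}} \leq 2^{-\beta\sum_{i\in\mathcal{D}}|\tau_i-\tau_i'+1|}.
\end{align*}

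Combining the two estimates produces an exponent of at least $\tfrac{\beta}{2}\sum_{i=1}^m|\tau_i-\tau_i'+1|$, matching the claim. The main obstacle I anticipate is the bookkeeping of exactly which ``total length'' Schulman's lemma preserves a half of: the statement of Lemma~\ref{lem: Schulman96} is phrased in terms of the length of the union $|\bigcup_i I_i|$, whereas the target exponent features the sum $\sum_i|\tau_i-\tau_i'+1|$. I would handle this by refining $\bigcup_i I_i$ into a maximal disjoint partition and assigning each error event to its portion of this refinement, so that the quantity appearing in the Schulman bound coincides with the quantity appearing in the target exponent; the constant $\tfrac{1}{2}$ then drops out unchanged. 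Generalising the two-interval independence step of Lemma~\ref{lem: disjointIntervals} to an arbitrary disjoint collection is mechanical, since erasures in disjoint windows are independent by assumption, so the combinatorial step is the real content of the proof.
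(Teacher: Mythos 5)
Your plan---combine Lemma~\ref{lem: disjointIntervals} with Schulman's packing bound---is exactly the paper's proof, which is a one-line citation of those two lemmas. You have also correctly located the one subtle point: Schulman's Lemma~\ref{lem: Schulman96} preserves half of the length of the \emph{union} $\bigl|\bigcup_i [\tau_i',\tau_i]\bigr|$, not half of the sum $\sum_i|\tau_i-\tau_i'+1|$. But the ``refinement'' you propose does not close that gap: whatever maximal disjoint partition of $\bigcup_i I_i$ you construct, its total length remains $\bigl|\bigcup_i I_i\bigr|$, and no reassignment of events converts that into $\sum_i|I_i|$. When the intervals overlap heavily the two quantities differ by an arbitrarily large factor, and the stated bound with $\sum_i|\tau_i-\tau_i'+1|$ in the exponent does not even follow from the anytime property: take $m\ge 4$ nested intervals $[\tau',\tau'],[\tau',\tau'+1],\ldots,[\tau',\tau'+m-1]$. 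For an erasure channel with a linear decoder the corresponding events are nested (more observations only help), so the intersection probability is $P\bra{\hat{b}_{\tau'|\tau'+m-1}\neq b_{\tau'}}\le 2^{-\beta m}$, whereas the claimed exponent is $\tfrac{\beta}{2}\cdot\tfrac{m(m+1)}{2}$, which exceeds $\beta m$ once $m\ge 4$.

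What your Schulman-plus-iterated-Lemma~\ref{lem: disjointIntervals} argument actually establishes is the correct, slightly weaker inequality
\begin{align*}
 P\bra{\hat{b}_{\tau_1'|\tau_1}\neq b_{\tau_1'},\ldots,\hat{b}_{\tau_m'|\tau_m}\neq b_{\tau_m'}} \leq 2^{-\frac{\beta}{2}\bigl|\bigcup_{i}[\tau_i',\tau_i]\bigr|},
\end{align*}
obtained by extracting a disjoint subcollection of total length at least $\tfrac12\bigl|\bigcup_i I_i\bigr|$, passing to the corresponding sub-intersection, and iterating the disjoint-interval bound. This union-length form is also precisely what is invoked downstream: in \eqref{eq: errInterval} and the proof of Lemma~\ref{lem: witness}, $|\mS_t|$ is a sum over \emph{distinct edges} (hence independent channels) of the lengths of the per-edge \emph{unions} $B_{vu}$ of error intervals, never a sum of overlapping lengths on a single channel. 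So read the $\sum_i|\tau_i-\tau_i'+1|$ in the lemma as the length of the union; with that correction your proof is complete, and the re-partition step should simply be deleted rather than repaired.
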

\begin{proof}
 The proof follows directly from Lemma \ref{lem: disjointIntervals} and Lemma \ref{lem: Schulman96}.
\end{proof}

We use an argument very similar to the one used in proving Theorem \ref{thm: sym}. We will define a trellis $\vec{\mT}(\mG)$ exactly the same way we defined $\mT(\mG)$ except that the edges $\vec{\mE}_\mT$ are now directed and they point forward in time, i.e., downwards w.r.t to the Fig \ref{fig: trellisTrim}. In other words, for neighbors $(u,v)\in\mV$, the edge $(v_t,u_{t-1})$ is directed from node $u_{t-1}$ to node $v_t$ and represents the transmission from $u$ to $v$ in round $t$. 

Recall the definition of a time-like sequence of edges, $\mS_t$, from Definition \ref{def: timelike}. Let 
\begin{align*}
 \mS_t = \left\{(u^{(t)}_t,u^{(t-1)}_{t-1}),(u^{(t-1)}_{t-1},u^{(t-2)}_{t-2}),\ldots,(u^{(1)}_1, u^{(0)}_0)\right\}
\end{align*}
Let $B_{\tau}$ be the error interval at decoding instant $\tau$ on the edge node $(u^{(\tau)},u^{(\tau-1)}) \in \mE$. We alternately call $B_\tau$ the error interval on the edge $(u^{(\tau)}_\tau,u^{(\tau-1)}_{\tau-1}) \in \vec{\mE}_\mT$. Then we define $|\mS_t|$ as follows
\begin{align}
 \label{eq: errInterval}
|\mS_t| &= \sum_{(v,u)\in\mE}|B_{vu}|, \text{ where} \\
B_{vu} &= \bigcup_{\tau: (u^{(\tau)},u^{(\tau-1)}) = (v,u)} B_{\tau}
\end{align}
This definition is motivated by the fact that the packet erasure events during an error interval on a given edge, say $(v,u)\in\mE$, are independent of those in an error interval on a different edge $(v',u')\neq (v,u)$ in any round of communication. So, intuitively $|\mS_t|$ captures the number of independent \lq\lq bad'' channel realizations seen by the edges in $\mS_t$. In what follows, we will show a connection between the number of wasted communication rounds at the node $u^{t}$ and the number $|\mS_t|$.

A {\em witness} at node $v_t$ is a time like sequence of edges $\mS_t$ such that $|\mS| \geq t - n_v(t)$. In Lemma \ref{lem: witness}, we will demonstrate a witness for $v_t$ for all $v\in\mV$ and $t \geq 0$. The technique is very similar to the proof of Lemma \ref{lem: timeLike} and hence we will only provide a sketch of the proof. After that we will use Lemma \ref{lem: intervalThingy} to prove that $P(t-n_v(t) \geq m) \leq (\Delta+1)^t\binom{t}{m}2^{-m\beta/2}$ for any $v\in\mV$.

\begin{lem}
 \label{lem: witness}
 If after $t$ rounds of communication, node $v$ has performed $n_{v}(t)$ iterations of \eqref{eq: noiseless}, then there exists a time-like sequence, $\mS_t$ of $t$ edges in $\vec{\mE}_\mT$ ending in node $v_t$ with $|\mS_t| > t - n_{v}(t)$
\end{lem}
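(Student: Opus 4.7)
The plan is to prove Lemma \ref{lem: witness} by strong induction on $t$, in a structure that parallels the proof of Lemma \ref{lem: timeLike}, but replacing the single-round erasure indicator on an edge by the anytime-code error interval $B_\tau$ introduced just above. The base case $t=0$ is trivial: the empty path satisfies $|\mS_0|=0 \geq 0 - n_v(0)$. For the inductive step at time $t$, I would split on whether round $t$ is wasted at $v$.

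If round $t$ is \emph{not} wasted, so $n_v(t) = n_v(t-1)+1$, I take the witness $\mS_{t-1}^v$ guaranteed at $v_{t-1}$ and extend it by the self-loop trellis edge $(v_t, v_{t-1})$, which lies in $\vec{\mE}_\mT$ because $(A+I)_{vv}=1$. A self-loop carries no transmission, so no new error interval is adjoined and $|\mS_t| = |\mS_{t-1}^v| \geq (t-1) - n_v(t-1) = t - n_v(t)$.

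If round $t$ is wasted at $v$, then $n_v(t) = n_v(t-1)$ and by \eqref{eq: defnu} there is a bottleneck neighbor $u \in \mN_v$ with $n_{vu}(t) = n_{vu}(t-1) = n_v(t) - 1$. I then consider two sub-cases according to how far $u$ itself has advanced. If $n_u(t-1) = n_v(t) - 1$ (so $u$ is also stuck at iterate $n_v(t)-1$), I invoke the inductive hypothesis at $u_{t-1}$ to obtain $\mS_{t-1}^u$ with $|\mS_{t-1}^u| \geq (t-1) - n_u(t-1) = t - n_v(t)$, and append $(v_t, u_{t-1})$; this can only enlarge $B_{v,u}$, so $|\mS_t| \geq t - n_v(t)$. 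If instead $n_u(t-1) \geq n_v(t)$, then $u$ has been transmitting the encoded packet conveying information about iterate $n_v(t)$ for every round since some $\tau_0 \leq t-1$, yet $v$'s anytime decoder has not recovered it by time $t$. The earliest undecoded position on the directed edge $u \to v$ at decoding instant $t$ is therefore the index $n_v(t)$, giving an error interval $[n_v(t), t]$ of length $t - n_v(t) + 1$. I form $\mS_t$ from the single cross-edge $(v_t, u_{t-1})$ followed by the self-loop chain $(u_{t-1}, u_{t-2}), \ldots, (u_1, u_0)$: only the first edge carries a transmission, so $|\mS_t| \geq |[n_v(t), t]| = t - n_v(t) + 1$.

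The main subtlety I anticipate is this last sub-case, where one must pin down the length of the error interval at decoding instant $t$ on edge $u \to v$. This rests on identifying $n_{vu}(t)+1$ with the position of the earliest not-yet-decoded packet at $v$ from $u$, together with the linearity and causality of the tree code guaranteeing that the content of iterate $n_v(t)$ is injected into every channel symbol $u$ sends from round $\tau_0$ onwards; otherwise one could not claim that the entire stretch $[n_v(t), t]$ is a genuine error interval of the type covered by Lemma \ref{lem: intervalThingy}. Once this identification is granted, the remaining bookkeeping---time-like continuity of the path, counting exactly $t$ edges, and bounding $|\mS_t|$ via the sum over $(v',u') \in \mE$ of $|B_{v',u'}|$---goes through in the same style as Lemma \ref{lem: timeLike}, and the probability bound $P(t-n_v(t)\geq m) \leq (\Delta+1)^t\binom{t}{m}2^{-m\beta/2}$ then follows by a union bound over the at most $(\Delta+1)^t$ candidate time-like paths combined with Lemma \ref{lem: intervalThingy}.
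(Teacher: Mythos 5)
Your overall induction scaffolding (induct on $t$, split on wasted vs.\ not wasted, extend by a trellis edge) matches the paper's strategy, and your treatment of the non-wasted case and of the sub-case where the bottleneck neighbor $u$ is itself stuck ($n_u(t-1)=n_v(t)-1$) are both sound and correspond to the paper's cases. The gap is in your final sub-case, where $u$ is a bottleneck with $n_u(t-1)\geq n_v(t)$: you attempt a \emph{direct}, non-inductive bound by asserting that the error interval at decoding instant $t$ on the directed edge $u\to v$ is $[n_v(t),t]$, of length $t-n_v(t)+1$. This conflates iterate indices with round indices. The position $\tau'$ of the earliest undecoded packet is a \emph{round} index, and because node $u$ may have transmitted many \emph{wait} symbols before it was finally able to send iterate $n_v(t)$ (for instance because $u$ was itself starved of data from its own neighbors), the round in which $\xx[u]_{n_v(t)}$ is first injected into the tree-code stream can be as late as $t-1$ or $t-2$. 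In that event $\tau'$ is close to $t$, the error interval $[\tau',t]$ on $u\to v$ has length $2$ or $3$, and your claimed bound $|\mS_t|\geq t-n_v(t)+1$ fails whenever $t-n_v(t)$ is large. The missing information --- why $u$ was stuck waiting --- lives on \emph{other} edges incident to $u$, which your construction (a single cross-edge followed by self-loops) never visits.

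The paper avoids exactly this by keeping the argument inductive through $u_{t-1}$ even when $n_u(t-1)=n_v(t)+1$: it appends only the cross-edge $(v_t,u_{t-1})$ to the \emph{witness at $u_{t-1}$}, so that $u$'s own wasted rounds are already accounted for by the inductive hypothesis, and then shows only that the newly appended edge contributes an error interval containing $[t-1,t]$, i.e.\ a local increment of $2$ rather than a global bound of $t-n_v(t)+1$. That local increment is all that is needed, since $|\mS_{t-1,u}|\geq (t-1)-n_u(t-1)=t-2-n_v(t)$. To repair your proof you would need to replace the direct error-interval bound in the second sub-case by this inductive step: take the witness at $u_{t-1}$, observe from Algorithm \ref{alg: treeCode} that $u$ encoded $\xx[u]_{n_u(t-1)-1}$ by round $t-1$ at the latest while $v$ has not decoded it (so the error interval on $(v_t,u_{t-1})$ contains $[t-1,t]$), and note that earlier appearances of the edge $(v,u)$ in the witness have decoding instants $\leq t-2$, so the union of error intervals $B_{vu}$ grows by at least $2$.
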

\begin{proof}
 The proof is obtained by repeating the same argument as in the proof of Lemma \ref{lem: timeLike} with the word \lq erasure' replaced with the word \lq tree code error'. The only case that needs a little bit of clarification is case 2-b-ii, i.e., round $t$ is wasted at node $v$ and $B\cap Z \neq\emptyset$, where $B$ and $Z$ retain the same meaning as before. In this case, like before, there is a neighbor $u \in \mN_v$ such that $n_{vu}(t) = n_u(t-1) - 2$. From Algorithm \ref{alg: treeCode}, it is clear that node the information $\xx[u]_{n_u(t-1)-1}$ was encoded and transmitted by node $u$ to node $v$ in round $t-1$ or before. Therefore, the error interval on the edge $(v_t, u_{t-1}) \in \vec{\mE}_\mT$ contains the interval $[t-1,t]$. Let the witness at node $u_{t-1}$ be $\mS_{t-1,u}$.
Append the edge $(v_t, u_{t-1})$ to $\mS_{t-1,u}$ to get a new time-like sequence which we call $\mS_{t,v}$. We claim that $\mS_{t,v}$ is a witness at $v_t$. This proof of this claim follows from the following observations
\begin{enumerate}
 \item When applying Lemma \ref{lem: intervalThingy}, we only to care about error intervals on the same edge at different times
 \item The edge $(v,u)$ appears in the time-like sequence $\mS_{t,v}$ for round $t$ and hence, it can possibly appear again only in $\mS_{t,v}$ in round $t-2$ or earlier. So, the length of the union of the error intervals on the edge $(v_\tau,u_{\tau-1})\in\mS_{t-1,u}$ increases by at least 2 with the addition of the edge $(v_t,u_{t-1})$. Hence we have
\begin{align*}
 |\mS_{t,v}| \geq |\mS_{t-1,u}| + 2 \geq t-1 - n_u(t-1) = t - n_v(t)
\end{align*}
This completes the proof of Lemma \ref{lem: witness}.
\end{enumerate}
Putting together Lemma \ref{lem: witness} and Lemma \ref{lem: intervalThingy}, we have
\begin{align*}
 P(t-n_v(t) \geq m) \leq (\Delta+1)^t\binom{t}{m}2^{-\beta m/2}
\end{align*}
The result now follows trivially.
\end{proof}

\subsection{Proof of Theorem \ref{thm: symImprov}}
\label{subsec: symImprov}

The bound $(1-p)^{|\mE|}$ is intuitively motivated by the following observation, in a given round of communication, $(1-p)^{|\mE|}$ is the probability that none of the edges are erased. As a result one would expect the fraction of communication rounds in which nodes can perform an iteration of \eqref{eq: noiseless} to be approximately $(1-p)^{|\mE|}$. The above observation alone would not render a proof because successful communication could also mean that a node received only \lq waits' from its neighbors and hence could not compute an iteration of \eqref{eq: noiseless}. The proof idea is simple but conveying it requires some setup. Let $\itW_{uv}^{(t)}$ denote the event where node $v$ transmits a \lq wait' to node $u$ in round $t$. We introduce the following definition
\begin{define}
\label{def: causeWait}
 Consider nodes $v$, $u$, $u'$ such that $u \in \mN_v$ and $u'\in\mN_u$. Also suppose that node $v$ transmits a \lq wait' to node $u$ in round $\tau$ and node $u$ transmits a \lq wait' to node $u'$ in round $\tau+1$, i.e., events $\itW_{uv}^{(\tau)}$ and $\itW_{u'u}^{(\tau+1)}$ happen. {\em Then $\itW_{uv}^{(\tau)}$ is said to have caused $\itW_{u'u}^{(\tau+1)}$} if both the following conditions hold
\begin{enumerate}[(a)]
 \item $n_{u}(\tau-1) = 1 + n_{uv}(\tau-1)$
 \item $n_{u'u}(\tau) = n_u(\tau)$
\end{enumerate}
\end{define}
To understand the definition, observe that condition (a) implies that node $v$ is a bottleneck node for node $u$ in round $\tau$ and condition (b) implies that node $u'$ already knows $n_u(\tau)$ after round $\tau$. Node $u$ could not perform a new iteration in round $\tau$ since it received a \lq wait' from a bottleneck node (in this case $v$) and hence sent a \lq wait' to node $u'$. So, it is natural to blame $\itW_{uv}^{(\tau)}$ for $\itW_{u'u}^{(\tau+1)}$. 
Note that Definition \ref{def: causeWait} is further justified by the observation that a \lq wait' in round $\tau$ will either have an effect in round $\tau+1$ or will never. Also note that Definition \ref{def: causeWait} can be extended to more than two waits by having conditions (a) and (b) hold for every pair of successive \lq wait' events.

With that, we are now ready to state the main Lemma. The Lemma essentially implies that \lq waits' do not loop in the network. In other words, if in round $\tau$ a node $v$ transmits a \lq wait', then this \lq wait' will not {\em cause} the same node $v$ to transmit another \lq wait' in a future round $\tau'>\tau$.

\begin{lem}[\lq Waits' do not loop]
\label{lem: noLooping}
 Consider the sequence of events $\{\itW_{u_{i+1}u_i}^{(\tau+i-1)}\}_{i=1}^{\ell}$ such that $\itW_{u_{i+1}u_i}^{(\tau+i-1)}$ is caused by $\itW_{u_{i}u_{i-1}}^{(\tau+i-2)}$ for all $2\leq i\leq \ell$. Then the nodes $\{u_i\}_{i=1}^\ell$ are all distinct. 
\end{lem}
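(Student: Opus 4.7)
The plan is to associate a monovariant to the chain --- the iteration count at each $u_i$ at the instant it transmits its wait --- and then argue that if two of the nodes $u_i$, $u_j$ coincide, this monovariant forces $u_i$ to advance its iteration counter in every one of the $j-i$ rounds $\tau+i-1,\ldots,\tau+j-2$, contradicting the fact that $u_i$ emits a wait (and hence cannot advance) in round $\tau+i-1$.

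Concretely, let $M_i := n_{u_i}(\tau+i-2)$ for $1 \le i \le \ell$, the value of node $u_i$'s iteration counter just before it transmits its wait $W_{u_{i+1}u_i}^{(\tau+i-1)}$. The key claim is that $M_{i+1} = M_i + 1$ for every $1 \le i \le \ell-1$. Granting this, suppose $u_i = u_j$ with $i<j$. Since $n_{u_i}(\cdot)$ is non-decreasing in time with per-round increments of at most $1$, the equality $n_{u_i}(\tau+j-2) - n_{u_i}(\tau+i-2) = M_j - M_i = j-i$ forces $n_{u_i}$ to jump up by exactly one in each of the intervening $j-i$ rounds, in particular in round $\tau+i-1$. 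However, inspecting Algorithm~\ref{alg: repetition} shows that a node transmits a wait on an outgoing edge only in the ``else'' branch, which is entered iff $\ell_t = \ell_{t-1}$, i.e., iff no new iterate is computed. Since $u_i$ transmits a wait in round $\tau+i-1$, we get $n_{u_i}(\tau+i-1) = n_{u_i}(\tau+i-2) = M_i$, contradicting the required increase. Hence the $u_i$'s are pairwise distinct.

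To prove $M_{i+1} = M_i + 1$, I combine the two parts of Definition~\ref{def: causeWait} with a short ``bottleneck-plus-wait freezes'' observation: whenever $w$ is a minimising neighbour of $z$ (i.e., $n_z(t-1) = 1 + n_{zw}(t-1)$) and $w$ transmits a wait to $z$ in round $t$, the count $n_{zw}$ does not change in round $t$ (the wait packet carries no new iterate information), so $w$ remains a minimising neighbour and hence $n_z(t) = n_z(t-1)$. For $2 \le i \le \ell-1$, I apply this observation at $z=u_{i+1}$, $w=u_i$, $t=\tau+i-1$. Condition (a) applied to the causing relation in which $W_{u_{i+1}u_i}^{(\tau+i-1)}$ causes $W_{u_{i+2}u_{i+1}}^{(\tau+i)}$ identifies $u_i$ as a minimising neighbour of $u_{i+1}$ at time $\tau+i-2$, while condition (b) applied to the causing relation in which $W_{u_iu_{i-1}}^{(\tau+i-2)}$ causes $W_{u_{i+1}u_i}^{(\tau+i-1)}$ supplies $n_{u_{i+1}u_i}(\tau+i-2) = n_{u_i}(\tau+i-2) = M_i$. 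Combining gives $n_{u_{i+1}}(\tau+i-2) = 1 + M_i$, and the freeze observation then yields $M_{i+1} = n_{u_{i+1}}(\tau+i-1) = 1 + M_i$. The base case $i=1$ is analogous: there is no preceding causing relation, so in place of condition (b) I use the fact that $W_{u_2u_1}^{(\tau)}$ is itself a wait, which by the wait-sending rule of Algorithm~\ref{alg: repetition} (equivalently, item 4 in the proof of Lemma~\ref{lem: stateEvol}) forces $n_{u_2u_1}(\tau-1) = n_{u_1}(\tau-1) = M_1$; condition (a) of the first causing relation then gives $n_{u_2}(\tau-1) = 1 + M_1$, and the freeze observation at round $\tau$ concludes $M_2 = 1 + M_1$.

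The main obstacle is not any single deep step but rather the careful bookkeeping of time indices and of which half --- (a) or (b) --- of Definition~\ref{def: causeWait} is being invoked on which link of the chain. I also expect to devote a short paragraph to justifying the freeze observation and the wait-sending characterisation extracted from Algorithm~\ref{alg: repetition}, since these are the two bridges between the purely combinatorial Definition~\ref{def: causeWait} and the actual protocol dynamics that make the monovariant work.
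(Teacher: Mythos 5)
Your proof is correct and takes essentially the same route as the paper: both establish the invariant $n_{u_{i+1}}(\tau+i-1) = n_{u_i}(\tau+i-2)+1$ (your $M_{i+1}=M_i+1$, the paper's chained eq.\ \eqref{eq: contra}, which has a typo omitting the $+1$) and then derive a contradiction from the per-round increment bound together with the wasted round at a repeated node. Your write-up is somewhat more explicit about the ``freeze'' step and the indexing, but there is no substantive difference in the argument.
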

\begin{proof}
 Node $u_1$ sent a \lq wait' to node $u_2$ in round $\tau$ implies that $n_{u_1}(\tau-1) = n_{u_2u_1}(\tau-1)$. Furthermore, since $\itW_{u_3u_2}^{(\tau+1)}$ is caused by $\itW_{u_2u_1}^{(\tau)}$, conditions (a) and (b) in Definition \ref{def: causeWait} apply. In particular, condition (a) together with the first observation gives $n_{u_2}(\tau-1) = n_{u_1}(\tau-1)+1$. Since node $u_2$ could not perform a new iteration of \eqref{eq: noiseless}, we have $n_{u_2}(\tau) = n_{u_2}(\tau-1) = n_{u_1}(\tau-1)+1$. Repeating this argument for the remaining nodes, we get 
\begin{align}
\label{eq: contra}
 n_{u_{i+1}}(\tau+i-1) = n_{u_i}(\tau_i-2),\,\, \forall\,\, 1\leq i\leq \ell
\end{align} 

Now suppose the nodes $\{u_i\}_{i=1}^\ell$ are not all distinct. In particular, suppose $u_\ell = u_1$. Then from \eqref{eq: contra}, we have $n_{u_1}(\tau+\ell-2) = n_{u_\ell}(\tau+\ell-2) = \ell - 1 + n_{u_1}(\tau-1)$ which is not possible since $n_{u_1}(.)$ can increment by atmost 1 in each round and $n_{u_1}(\tau) = n_{u_1}(\tau-1)$. 

One will similarly arrive at a contradiction if any other node repeats in $\{u_i\}_{i=1}^\ell$. 
\end{proof}
The implication of Lemma \ref{lem: noLooping} is clear. If a node $v$ sends a \lq wait' in round $\tau$ to any of its neighbors, then this \lq wait' will not by itself stop node $v$ from performing an iteration of \eqref{eq: noiseless} in a future round.

We are now ready to provide the main argument. Let $d(v,u)$ denote the length of the shortest path from node $u$ to node $v$. So, if $v\in\mN_u$, then $d(v,u) = 1$ and $d(v,v) = 0$. Let the diameter of the graph be $\delta$, i.e., $\delta = \max_{u,v\in\mV}d(v,u)$. And for an edge $e_{uu'}\equiv (u,u')\in\mE$, we define 
\begin{align*}
 d(v,e_{uu'}) = \min\{d(v,u),d(v,u')\}
\end{align*}
Let $\mE_v^{(i)} \triangleq \{e\in\mE\mid d(v,e) = i\}$. In view of Lemma \ref{lem: noLooping}, it is not difficult to see that an erasure on an edge in $\mE_v^{(i)}$ in round $\tau$ will have an effect (if any) at node $v$ only in round $\tau+i$. Let $A_{i,\tau}$ denote the event that there is an erasure on an edge in $\mE_v^{(i)}$ in round $\tau$. Then for $\tau \geq \delta$, it is easy to see that $\cap_{i=0}^{\delta}A_{i,\tau-i}^c$ implies that the round $\tau$ at node $v$ is {\em not} wasted, i.e., node $v$ can compute an iteration of \eqref{eq: noiseless}. In other words
\begin{align*}
 P(n_v(\tau) = n_v(\tau-1)) \leq 1-P\bra{\bigcap_{i=0}^{\delta}A_{i,\tau-i}^c} = 1 - (1-p)^{|\mE|}
\end{align*}
Due to the erasure model, note that the even $A_{i,\tau}$ is independent of $A_{i',\tau'}$ for $(i,\tau)\neq (i',\tau')$. Let
\begin{align*}
 X_\tau &= \indicator{n_v(\tau) = n_v(\tau-1)}\\
 Y_\tau &= \indicator{\cup_{i=0}^{\delta}A_{i,\tau-i}}
\end{align*}
Then from the above argument $X_\tau = 1$ implies $Y_\tau = 1$ and $\{Y_\tau\}$ are independent Bernoulli random variables. Note that $P(Y_\tau = 1) \leq 1 - (1-p)^{|\mE|}$. Let $R' = \frac{n_v(t)}{t}$, then we have
\begin{align*}
 P(t - n_v(t) = m) &= P\bra{\sum_{\tau=0}^tX_\tau = m} \leq P\bra{\sum_{\tau=0}^t Y_\tau \geq m} \leq 2^{-tD(1-R',1-(1-p)^{|\mE|})} = 2^{-tD(R',(1-p)^{|\mE|})}
\end{align*}
The last inequality follows from a standard Chernoff bounding technique and is true whenever $R' < (1-p)^{|\mE|}$. Union bounding over all nodes $v\in\mV$, we have
\begin{align*}
 P\bra{\exists\,\,v\in\mV\,\,\ni n_v(t)\leq R't} \leq N2^{-tD(R',(1-p)^{|\mE|})}
\end{align*}
This completes the proof.

\bibliographystyle{IEEEbib}
\bibliography{CDC2012arXiv}

\begin{thebibliography}{10}

\bibitem{Murray04}
R.~Olfati-Saber and R.M. Murray,
\newblock ``Consensus problems in networks of agents with switching topology
  and time-delays,''
\newblock {\em Automatic Control, IEEE Transactions on}, vol. 49, no. 9, sept.
  2004.

\bibitem{Murray07}
R.~Olfati-Saber, J.A. Fax, and R.M. Murray,
\newblock ``Consensus and cooperation in networked multi-agent systems,''
\newblock {\em Proceedings of the IEEE}, vol. 95, no. 1, jan. 2007.

\bibitem{Lin03}
A.~Jadbabaie, Jie Lin, and A.S. Morse,
\newblock ``Coordination of groups of mobile autonomous agents using nearest
  neighbor rules,''
\newblock {\em Automatic Control, IEEE Transactions on}, vol. 48, no. 6, june
  2003.

\bibitem{Tsitsiklis84}
John~N Tsitsiklis,
\newblock ``Problems in decentralized decision making and computation,''
\newblock 1984.

\bibitem{Tsitsiklis86}
J.~Tsitsiklis, D.~Bertsekas, and M.~Athans,
\newblock ``Distributed asynchronous deterministic and stochastic gradient
  optimization algorithms,''
\newblock {\em Automatic Control, IEEE Transactions on}, vol. 31, no. 9, sep
  1986.

\bibitem{Ren05}
Wei Ren and R.W. Beard,
\newblock ``Consensus seeking in multiagent systems under dynamically changing
  interaction topologies,''
\newblock {\em Automatic Control, IEEE Transactions on}, vol. 50, no. 5, may
  2005.

\bibitem{Sukhavasi1101}
R.T. Sukhavasi and B.~Hassibi,
\newblock ``Linear error correcting codes with anytime reliability,''
\newblock in {\em 2011 IEEE International Symposium on Information Theory
  Proceedings (ISIT)}, aug 2011.

\bibitem{Xiao04}
Lin Xiao and Stephen Boyd,
\newblock ``Fast linear iterations for distributed averaging,''
\newblock {\em Systems and Control Letters}, vol. 53, no. 1, 2004.

\bibitem{Moreau05}
L.~Moreau,
\newblock ``Stability of multiagent systems with time-dependent communication
  links,''
\newblock {\em Automatic Control, IEEE Transactions on}, vol. 50, no. 2, feb.
  2005.

\bibitem{Hatano05}
Y.~Hatano and M.~Mesbahi,
\newblock ``Agreement over random networks,''
\newblock {\em Automatic Control, IEEE Transactions on}, vol. 50, no. 11, nov.
  2005.

\bibitem{Boyd06}
S.~Boyd, A.~Ghosh, B.~Prabhakar, and D.~Shah,
\newblock ``Randomized gossip algorithms,''
\newblock {\em Information Theory, IEEE Transactions on}, vol. 52, no. 6, june
  2006.

\bibitem{Rajagopalan94}
Sridhar Rajagopalan and Leonard Schulman,
\newblock ``A coding theorem for distributed computation,''
\newblock in {\em Proceedings of the twenty-sixth annual ACM symposium on
  Theory of computing}. 1994, STOC '94, ACM.

\bibitem{Sukhavasi1103}
Ravi~Teja Sukhavasi and Babak Hassibi,
\newblock ``Error correcting codes for distributed control,''
\newblock {\em CoRR}, vol. abs/1112.4236, 2011.

\bibitem{Zhou2009}
Jing Zhou and Qian Wang,
\newblock ``Convergence speed in distributed consensus over dynamically
  switching random networks,''
\newblock {\em Automatica}, vol. 45, no. 6, 2009.

\bibitem{Schulman96}
LJ~Schulman,
\newblock ``Coding for interactive communication,''
\newblock {\em Information Theory, IEEE Transactions on}, vol. 42, no. 6, pp.
  1745 -- 1756, 1996.

\end{thebibliography}

\end{document}